\providecommand{\U}[1]{\protect \rule{.1in}{.1in}}
\newtheorem{theorem}{Theorem}[section]
\newtheorem{lemma}[theorem]{Lemma}
\newtheorem{remark}[theorem]{Remark}
\numberwithin{equation}{section}
\begin{document}
\title[Mean field equations on tori]{Non-existence of solutions for a mean field equation on flat tori at critical
parameter $16\pi$}
\author{Zhijie Chen}
\address{Department of Mathematical Sciences, Yau Mathematical Sciences Center,
Tsinghua University, Beijing, 100084, China }
\email{zjchen@math.tsinghua.edu.cn}
\author{Ting-Jung Kuo}
\address{Department of Mathematics, National Taiwan Normal University,
Taipei 11677, Taiwan }
\email{tjkuo1215@gmail.com}
\author{Chang-Shou Lin}
\address{Taida Institute for Mathematical Sciences (TIMS), Center for Advanced Study in
Theoretical Sciences (CASTS), National Taiwan University, Taipei 10617, Taiwan }
\email{cslin@math.ntu.edu.tw}

\begin{abstract}
It is known from \cite{LW} that the solvability of the mean field equation $\Delta u+e^{u}=8n\pi \delta_{0}$ with $n\in\mathbb{N}_{\geq 1}$ on
a flat torus $E_{\tau}$ essentially depends on the geometry of $E_{\tau}$. A conjecture is the non-existence of solutions for this equation if $E_{\tau}$ is a rectangular torus, which was proved for $n=1$ in \cite{LW}. For any $n\in \mathbb{N}_{\geq2}$, this conjecture seems
challenging from the viewpoint of PDE theory.
In this paper, we prove this conjecture for $n=2$ (i.e. at critical
parameter $16\pi$).

\end{abstract}
\maketitle

\section{Introduction}

Let $E_{\tau}:=\mathbb{C}/\Lambda_{\tau}$ be a flat torus on the plane, where
$\Lambda_{\tau}=\mathbb{Z+Z}\tau$, $\tau \in \mathbb{H}=\left \{  \tau \text{
}|\text{ }\operatorname{Im}\tau>0\right \}  $. Consider the following mean
field equation with a parameter $\rho>0$:%
\begin{equation}
\Delta u+e^{u}=\rho \cdot \delta_{0}\  \  \text{on}\ E_{\tau},\label{eq0}%
\end{equation}
where $\delta_{0}$ is the Dirac measure at the origin $0$. Equation (\ref{eq0}) has a geometric origin (cf. \cite{CLW}). In conformal
geometry, for a solution $u(x)$, the new metric $ds^{2}=e^{u(x)}|dx|^{2}$ has
positive constant curvature. Since the RHS has singularities, $ds^{2}$ is a
metric with \textit{conic singularity}. Equation (\ref{eq0}) also appears in
statistical physics as the equation for the mean field limit of the Euler flow
in Onsager's vortex model (cf. \cite{CLMP}), hence its name. Recently equation
(\ref{eq0}) was shown to be related to the self-dual condensates of the
Chern-Simons-Higgs equation in superconductivity. We refer the readers to
\cite{CLW4,Choe,Eremenko1,LY,NT1} and references therein for recent developments of
related subjects of equation (\ref{eq0}).

When $\rho \not \in 8\pi \mathbb{N}$, it can be proved that solutions of
(\ref{eq0}) have \textit{uniform a priori} bounds in $C_{loc}^{2}(E_{\tau
}\backslash \{0\})$ and hence the topological Leray-Schauder degree $d_{\rho}$
is well-defined; see \cite{Bartolucci,CL-1}. Recently, Chen and the second
author \cite{CL3} proved that $d_{\rho}=m$ for any $m\in \mathbb{N}_{\geq1}$
and $\rho \in(8\pi(m-1),8\pi m)$. Consequently, \textit{equation (\ref{eq0}) always
has solutions when }$\rho \not \in 8\pi \mathbb{N}$\textit{, no matter with the
geometry of the torus} $E_{\tau}$.

However when $\rho \in8\pi \mathbb{N}_{\geq1}$, \textit{a priori }bounds for
solutions of (\ref{eq0}) might not exist, and the existence of solutions
becomes an intricate question. In this paper, we consider this mean field
equation at critical parameters $\rho=8n\pi$ (\cite{CLW,LW,CLW2}):
\begin{equation}
\Delta u+e^{u}=8n\pi \delta_{0}\  \  \text{on}\ E_{\tau},\label{eq1}%
\end{equation}
where $n\in \mathbb{N}_{\geq1}$. The case $n=1$ was first studied by Wang and
the second author \cite{LW}, where they discovered that \emph{the solvability of
equation (\ref{eq1}) essentially depends on the moduli $\tau$ of the
torus $E_{\tau}$}, a surprising phenomena which does not appear for
non-critical parameter $\rho$'s. For example, they proved that when $\tau \in
i\mathbb{R}^{+}$ (i.e. $E_{\tau}$ is a rectangular torus), equation (\ref{eq1})
with $n=1$ has {\it no} solution; while for $\tau=\frac{1}{2}+\frac{\sqrt{3}}{2}i$
(i.e. $E_{\tau}$ is a rhombus torus), equation (\ref{eq1}) with $n=1$ has solutions. Later, the case $n=1$ was thoroughly investigated in \cite{new1}.

To settle this challenging
problem for $n\geq 2$, Chai-Lin-Wang \cite{CLW} and subsequently Lin-Wang \cite{CLW2} studied it from the viewpoint of algebraic geometry. They
developed a theory to connect this PDE problem with hyper-elliptic curves and modular forms. Among other things, they proposed the following conjecture.\medskip

\noindent \textbf{Conjecture. }\cite{CLW2} \textit{When $\tau \in
i\mathbb{R}^{+}$, i.e. $E_{\tau}$ is a rectangular torus, equation (\ref{eq1}) has no solutions for any $n\geq 2$.} \medskip

This paper is the first in our project devoted to studying the existence (or
non-existence) problem of equation (\ref{eq1}) for $n\geq2$.
The purpose of this paper is to confirm the conjecture for $n=2$.

\begin{theorem}
\label{thm1}Suppose $\tau \in i\mathbb{R}^{+}$, i.e. $E_{\tau}$ is a rectangular
torus. Then equation (\ref{eq1}) with $n=2$ on $E_{\tau}$ has no solutions.
\end{theorem}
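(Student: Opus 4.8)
The plan is to bypass PDE methods entirely and use the algebraic-geometric dictionary of Chai--Lin--Wang \cite{CLW} and Lin--Wang \cite{CLW2}, which reduces the solvability of \eqref{eq1} to a monodromy problem for a Lam\'e equation. First I would recall that a solution $u$ of $\Delta u+e^{u}=16\pi\delta_0$ produces a (multivalued) developing map $f$ with $e^{u}=8|f'|^{2}/(1+|f|^{2})^{2}$, and that $f=y_1/y_2$ for a basis of solutions of the Lam\'e equation
\begin{equation}
L_{2,B}:\quad y''=\bigl(6\wp(z;\tau)+B\bigr)y,\qquad B\in\mathbb{C}.\label{lame}
\end{equation}
Single-valuedness and reality of $e^{u}$ force the projective monodromy of $f$ to lie in $PSU(2)$, i.e.\ to be unitary. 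Conversely the theory guarantees that a solution exists if and only if there is some accessory parameter $B$ for which the monodromy of \eqref{lame} is unitary and the associated developing map is non-degenerate. Thus the whole problem becomes: for $\tau\in i\mathbb{R}^{+}$, show that no $B$ yields an admissible unitary monodromy.

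Next I would parametrize the monodromy by the Hermite--Krichever ansatz. For $n=2$ the Bloch solutions of \eqref{lame} have the form
\begin{equation*}
y^{\pm}(z)=e^{\pm\lambda z}\,\frac{\sigma(z\mp a_1)\,\sigma(z\mp a_2)}{\sigma(z)^{2}},
\end{equation*}
where $(a_1,a_2,\lambda,B)$ are constrained to a point $P$ of the hyperelliptic spectral curve $\Gamma_2:\ C^{2}=\ell_2(B)$ of genus $2$. A direct computation with the quasi-periodicity of $\sigma$ gives the multipliers $L_i=y^{+}(z+\omega_i)/y^{+}(z)=\exp\!\bigl(\lambda\omega_i-\eta_i\,\sigma_0\bigr)$, where $\sigma_0:=a_1+a_2$ and $\eta_i=2\zeta(\omega_i/2)$. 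The projective monodromy of $f=y^{+}/y^{-}$ is then diagonal with eigenvalue $L_i^{2}$, so unitarity is exactly the pair of real equations
\begin{equation}
\operatorname{Re}\bigl(\lambda\omega_i-\eta_i\sigma_0\bigr)=0,\qquad i=1,2.\label{unit}
\end{equation}
This is the analogue of the Green-function critical-point equation that governed the case $n=1$, but now $\lambda$ and $\sigma_0$ are transcendental functions of the point $P\in\Gamma_2$ rather than of a single point of $E_\tau$.

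The crucial input is the extra symmetry of the rectangular torus. For $\tau\in i\mathbb{R}^{+}$ one has $g_2,g_3\in\mathbb{R}$, $\ell_2$ has real coefficients, $\eta_1\in\mathbb{R}$ and $\eta_2\in i\mathbb{R}$, and the anti-holomorphic involutions $z\mapsto\bar z$ and $z\mapsto-\bar z$ of $E_\tau$ lift to a real structure on $\Gamma_2$ under which $\overline{y^{\pm}(\bar z)}$ again solves $L_{2,\bar B}$. I would use these involutions to show that any point $P$ solving \eqref{unit} must be fixed, up to the hyperelliptic involution, by the real structure, hence must lie on a distinguished real one-cycle (oval) of $\Gamma_2$, along which $\lambda$ and $\sigma_0$ inherit explicit reality properties. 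Equivalently, I would study the $n=2$ premodular form $Z_2(\sigma_0;\tau)$ of Lin--Wang, whose non-$2$-torsion zeros correspond precisely to genuine solutions, and show by a sign/monotonicity argument on the real locus that $Z_2(\,\cdot\,;\tau)$ has no such zero when $\tau\in i\mathbb{R}^{+}$.

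The main obstacle is exactly this last step. For $n=1$ the two conditions \eqref{unit} collapse to $\nabla G(\sigma_0)=0$, and one wins because the Green function of a rectangular torus has only its three half-period critical points; there is no such scalar potential for $n=2$. Instead one must carry out the global real analysis on the genus-$2$ curve: control the branch points of $\Gamma_2$ (the values of $B$ for which $L_{2,B}$ possesses Lam\'e-polynomial solutions), show that at every such branch point the monodromy degenerates to the trivial or $\pm$identity type so that the developing map fails to produce a metric, and then prove that on the remaining real locus the two equations \eqref{unit} cannot hold simultaneously. Establishing this non-vanishing---either through the explicit reality structure of the multipliers $L_i$ or through a sign analysis of the premodular form $Z_2$---is the technical heart of the argument, and it is precisely here that the hypothesis $\tau\in i\mathbb{R}^{+}$ is indispensable.
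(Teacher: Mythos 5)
Your setup is legitimate---it is exactly the integrable-system dictionary (developing map, Lam\'e equation $y''=(6\wp+B)y$, Hermite--Krichever ansatz, spectral curve, premodular form $Z_2$) of \cite{CLW,CLW2}, and your unitarity conditions $\operatorname{Re}(\lambda\omega_i-\eta_i\sigma_0)=0$, $i=1,2$, are the correct translation of solvability. But the proposal is not a proof: both decisive steps are asserted rather than carried out. First, the reduction to the real locus does not follow from the symmetry you invoke. With $\eta_1\in\mathbb{R}$ and $\eta_2\in i\mathbb{R}$, conjugating the two unitarity equations shows only that the solution set on $\Gamma_2$ is \emph{invariant} under the real structure; it does not force an individual solution to be fixed (even up to the hyperelliptic involution), so solutions could a priori occur in conjugate pairs off the real ovals, and nothing you say excludes this. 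Second, and more seriously, the non-vanishing statement you defer---that on the real locus the two conditions are incompatible, equivalently that $Z_2(\cdot;\tau)$ has no non-$2$-torsion zeros for $\tau\in i\mathbb{R}^{+}$---\emph{is} Theorem \ref{thm1} itself, up to the dictionary you quoted. Calling it ``the technical heart'' and gesturing at ``a sign/monotonicity argument'' reduces the theorem to an unproved claim of the same strength, with no mechanism given for controlling $\lambda$ and $\sigma_0$ along a genus-$2$ real cycle; the same goes for the claimed degeneration analysis at the branch points (Lam\'e polynomials), which is announced but never performed.

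For comparison, the paper deliberately avoids working on $\Gamma_2$. Via the scaling family $u_\beta$ and the Pohozaev identity, it reduces solvability to the concrete system (\ref{new213}) for a pair of points on $E_\tau$ itself: $G_z(a)+G_z(b)=0$, $\wp'(a)+\wp'(b)=0$, $a\notin\{-a,\pm b\}$. (In the dictionary of \cite{CLW}, with $\lambda=\zeta(a_1)+\zeta(a_2)$ and the Legendre relation, your two unitarity conditions become precisely the $G_z$-equation, and the spectral constraint the $\wp'$-equation; so the step you postpone \emph{is} the analysis of (\ref{new213}).) The non-existence is then proved by elementary real analysis on the rectangle: the classification of the branches $b_2(a),b_3(a)$ of $\wp'(a)+\wp'(b)=0$ along $\partial E_{\tau}$ and a count of critical points of $H_i(a)=G_{x_2}(a)+G_{x_2}(b_i(a))$ (Lemmas \ref{l1-30-1}, \ref{lemma4-7}, \ref{lemma4-8}, \ref{l1-22-1}) rule out boundary solutions; Lemma \ref{l2-22-1} rules out the coordinate axes; the deformation argument of Lemma \ref{lemma-7} forces $a$ and $b$ into a common half-plane; and the quadrant signs of $\nabla G$ then contradict $G_z(a)+G_z(b)=0$. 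If you wish to complete your plan, you should expect to carry out real-locus analysis of exactly this kind; the real structure by itself buys nothing beyond the reality facts already encoded in Lemma \ref{11-9-3} and Remark \ref{re1-10-3}.
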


Theorem \ref{thm1} has important applications. In a forthcoming paper, we will apply Theorem \ref{thm1} (together with the modular form theory established in \cite{CLW2}) to prove the following existence result on rhombus tori.

\medskip
\noindent \textbf{Theorem A.}
{\it Let $\tau=\frac{1}{2}+ib$ with $b>0$. Then there exists $b^{\ast
}\in(\frac{\sqrt{3}}{2},\frac{6}{5})$ such that for any $b>b^{\ast}$, equation
(\ref{eq1}) with $n=2$ on $E_{\tau}$ has a solution.}
\medskip

Remark that Theorem A is almost optimal in the sense that if
$\tau=\frac{1}{2}+\frac{\sqrt{3}}{2}i$, then equation (\ref{eq1}) with $n=2$
on $E_{\tau}$ has \emph{no} solutions (as mentioned before, (\ref{eq1}) with $n=1$ on this $E_{\tau}$ has solutions. This shows why we need to discuss different $n$'s separately). See Theorem \ref{thm-000} in Section 3.

In PDE theory, a standard method of proving non-existence results is to
apply the Pohozaev identity; see \cite{new3} for example. Obviously, this
method by Pohozaev identity does not work here. Our proof is based on the fact that equation (\ref{eq1}) can be viewed as an integrable system \cite{CLW}.

The paper is organized as follows. In Section \ref{non-existence}, we give a
short review of equation (\ref{eq1}) from the aspect of integrable system.
This point of view can reduce our existence problem to a couple equations
involving with Weierstrass elliptic functions. In Section 3, we prove this
couple equations have no solutions if $\tau \in i\mathbb{R}^{+}$. Our proof is elementary in the sense that only the basic theory of Weierstrass elliptic functions covered by the standard textbook (cf. \cite{A}) are used. This gives
the proof of Theorem \ref{thm1}.

\section{Overview of (\ref{eq1}) as an integrable system}

\label{non-existence}

In this section, we provide some basic facts about equation (\ref{eq1}) from
the viewpoint of integrable system; see \cite{CLW} for a complete discussion.
Throughout the
paper, we use the notations: $\omega_{0}=0$, $\omega_{1}=1$, $\omega_{2}=\tau$, $\omega_{3}=1+\tau$.

The Liouville theorem says that for any solution $u(z)$ to (\ref{eq1}), there
is a meromorphic function $f(z)$ defined in $\mathbb{C}$ such that%
\begin{equation}
u(z)=\log \frac{8|f^{\prime}(z)|^{2}}{(1+|f(z)|^{2})^{2}}. \label{502}%
\end{equation}
This $f(z)$ is called a developing map. Although $u$ is a doubly periodic
function, $f(z)$ is not an elliptic function. By differentiating (\ref{502}),
we have
\begin{equation}
u_{zz}-\frac{1}{2}u_{z}^{2}= \{ f;z \}:=\left(  \frac{f^{\prime \prime}%
}{f^{\prime}}\right)  ^{\prime}-\frac{1}{2}\left(  \frac{f^{\prime \prime}%
}{f^{\prime}}\right)  ^{2}. \label{new22}%
\end{equation}
Conventionally, the RHS of this identity is called the Schwarzian derivative
of $f(z)$, denoted by $\{ f;z \}$. By the classical Schwarzian theory, any two
developing maps $f_{1}$ and $f_{2}$ of the same solution $u$ must satisfy
\begin{equation}
f_{2}(z)=\gamma \cdot f_{1}(z):=\frac{af_{1}(z)+b}{cf_{1}(z)+d} \label{cc}%
\end{equation}
for some $\gamma=%
\begin{pmatrix}
a & b\\
c & d
\end{pmatrix}
\in SL(2,\mathbb{C})$. Furthermore, by substituting (\ref{cc}) into
(\ref{502}), a direct computation shows $\gamma \in SU(2)$, i.e.%
\begin{equation}
d=\bar{a},\text{ \ }c=-\bar{b}\text{ \ and \ }|a|^{2}+|b|^{2}=1. \label{new25}%
\end{equation}

As we mentioned above, $f(z)$ is not doubly periodic. But $f(z+w_{1})$ and
$f(z+w_{2})$ are also developing maps of the same $u(z)$ and then (\ref{cc})
implies the existence of $\gamma_{i}\in SU(2)$ such that
\begin{equation}
f(z+\omega_{1})=\gamma_{1}\cdot f(z)\  \  \text{and}\  \ f(z+\omega_{2}%
)=\gamma_{2}\cdot f(z). \label{new26}%
\end{equation}
After normalizing $f(z)$ by the action of some $\gamma \in SU(2)$,
(\ref{new26}) can be simplified by
\begin{equation}
f(z+\omega_{j})=e^{2\pi i\theta_{j}}f(z),\text{ \ }j=1,2, \label{cc-4}%
\end{equation}
for some $\theta_{j}\in \mathbb{R}$. We call a developing map $f$ satisfying
(\ref{cc-4}) a \emph{normalized developing map}.

A simple observation is that once $f$ satisfies (\ref{cc-4}), then for any
$\beta \in \mathbb{R}$, $e^{\beta}f(z)$ also satisfies (\ref{cc-4}). Therefore,
once we have a solution $u(z)$, then we get a $1$-parameter family of
solutions:%
\[
u_{\beta}(z):=\log \frac{8e^{2\beta}|f^{\prime}(z)|^{2}}{(1+e^{2\beta
}|f(z)|^{2})^{2}}. \label{24-1}%
\]
Clearly $u_{\beta}(z)$ blow up as $\beta \rightarrow \pm \infty$. More precisely,
$u_{\beta}(z)$ blow up at and only at any zeros of $f(z)$ as $\beta
\rightarrow+\infty$, and $u_{\beta}(z)$ blow up at and only at any poles of
$f(z)$ as $\beta \rightarrow-\infty$. For (\ref{eq1}), the blowup set of a
sequence of solutions $u_{\beta}$ consists of $n$ distinct points in $E_{\tau
}$. Hence $f(z)$ has zeros at $z=a_{i} \in E_{\tau}$, $i=1, \cdots,n$, and
poles at $z=b_{i} \in E_{\tau}$, $i=1, \cdots,n$. Furthermore, $\{ a_{1},
\cdots,a_{n} \}= \{ -b_{1}, \cdots,-b_{n} \}$ in $E_{\tau}$; see \cite{CLW}.
Since $\{ a_{i} \}$ and $\{ b_{i} \}$ are the zeros and poles of a meromorphic
function, we have
\begin{equation}
\label{eq29}a_{i} \neq a_{j}\  \text{for any}\ i\neq j\ ;\quad a_{i} \neq
-a_{j}\  \text{for any}\ i , j.
\end{equation}

In the sequel, we always assume $n=2$ in (\ref{eq1}). So $u_{\beta}$ has
exactly two blowup points as $\beta \rightarrow+\infty$, say $a$ and $b$. Then
(\ref{eq29}) and the well known Pohozaev identity imply that $a$ and $b$
satisfy
\begin{equation}
2G_{z}(a)=G_{z}(a-b),\quad2G_{z}(b)=G_{z}(b-a), \quad a \notin \{-a,\pm b\}.
\label{4-23-1}%
\end{equation}
where $G(z)=G(z|\tau)$ is the Green function of $-\Delta$ on the torus
$E_{\tau}$. See \cite{CL-1, CL3} for the Pohozaev identity. Since the Green
function $G(z)$ is even, $G_{z}(z)$ is odd and (\ref{4-23-1}) is equivalent
to
\begin{equation}
G_{z}(a)+G_{z}(b)=0,\; G_{z}(a)-G_{z}(b)-G_{z}(a-b)=0,\; a \notin \{-a,\pm b\}.
\label{1-17-3}%
\end{equation}
On the other hand, the Green function $G(z)$ can be written in terms of
Weierstrass elliptic functions, see \cite{LW}. In particular, we have
\begin{align}
-4\pi G_{z}(z)  &  =\zeta(z|\tau)-\eta_{1}(\tau)z+\frac{2\pi
i\operatorname{Im}z}{\operatorname{Im}\tau}\label{40}\\
&  =\zeta(z|\tau)-r\eta_{1}(\tau)-s\eta_{2}(\tau),\nonumber
\end{align}
where $z=r+s\tau$ with $r,s\in \mathbb{R}$. Here we recall that $\wp(z)=\wp(z|\tau)$ is the Weierstrass elliptic function with periods $\omega_1=1$ and $\omega_2=\tau$, defined by
$$\wp(z|\tau):=\frac{1}{z^2}+\sum_{\omega\in\Lambda_\tau\setminus\{0\}}
\left(\frac{1}{(z-\omega)^2}
-\frac{1}{\omega^2}\right),$$
and $\zeta(z)=\zeta(z|\tau):=-\int^z\wp(\xi|\tau)d\xi$ is the Weierstrass zeta function, which is an odd meromorphic function with two quasi-periods $\eta_{j}(z)$ (cf. \cite{Lang}):
\begin{equation}
\zeta(z+1|\tau)=\zeta(z|\tau)+\eta_{1}(\tau),\  \  \zeta(z+\tau|\tau
)=\zeta(z|\tau)+\eta_{2}(\tau). \label{quasi}%
\end{equation}

In view of \eqref{40}, the second
equation in \eqref{1-17-3} can be changed to
\begin{equation}
\zeta(a)-\zeta(b)-\zeta(a-b)=0. \label{2-17-3}%
\end{equation}
Next, we should apply the classical addition formula (cf. \cite{Lang}):
\[
\zeta(u+v)-\zeta(u)-\zeta(v)=\frac{1}{2}\frac{\wp^{\prime}(u)-\wp^{\prime}%
(v)}{\wp(u)-\wp(v)}%
\]
by taking $(u,v)=(a,-b)$. Then (\ref{2-17-3}) becomes
\[
\wp^{\prime}(a)+\wp^{\prime}(b)=0.
\]
Therefore, the Pohozaev identity (\ref{1-17-3}) is equivalent to
\begin{equation}
G_{z}(a)+G_{z}(b)=0\  \  \text{and}\  \  \wp^{\prime}(a)+\wp^{\prime}(b)=0,\quad a
\notin \{-a,\pm b\}. \label{new213}%
\end{equation}
Thus, we summarize the main result in this short overview as follows:
\textit{Suppose equation (\ref{eq1}) with $n=2$ has a solution $u$, then there
exist $a,b\in E_{\tau}$ such that (\ref{new213}) holds true.}

\section{Non-existence for $\tau \in i\mathbb{R}^{+}$}

\label{non-ex}

In this section, we want to prove the non-existence of solutions to%
\begin{equation}
\Delta u+e^{u}=16\pi \delta_{0}\  \text{ on}\;E_{\tau}, \label{0011-1-22-1}%
\end{equation}
if $\tau \in i\mathbb{R}^{+}$, i.e. $E_{\tau}$ is a rectangular torus. To prove
this non-existence result, it suffices to show that there are no pair $(a,b)$
in $E_{\tau}$ such that (\ref{new213}) holds. The proof for $\tau \in
i\mathbb{R}^{+}$ is really non-trivial, however, it is much simpler if $\tau=
e^{\frac{\pi i}{3}}$.

\begin{theorem}
\label{thm-000}Let $\rho:= e^{\pi i/3}=\frac{1}{2}+\frac{\sqrt{3}}{2}i$. Then
equation%
\begin{equation}
\Delta u+e^{u}=16\pi \delta_{0}\text{ \ on \ }E_{\rho} \label{mfe-002}%
\end{equation}
has no solutions.
\end{theorem}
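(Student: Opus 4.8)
The plan is to use the reduction established in Section~\ref{non-existence}: to prove Theorem~\ref{thm-000} it suffices to show that the system \eqref{new213} has no admissible pair $(a,b)$, i.e. no pair with $a\notin\{-a,\pm b\}$. Everything rests on the extra symmetry of $E_\rho$. Since $\rho=e^{\pi i/3}$, the cube root of unity $\omega:=\rho^{2}=e^{2\pi i/3}$ satisfies $\omega\cdot1=\rho^{2}=\rho-1\in\Lambda_\rho$ and $\omega\cdot\rho=\rho^{3}=-1\in\Lambda_\rho$, so $z\mapsto\omega z$ is an order-$3$ automorphism of $E_\rho$. Homogeneity of the Weierstrass functions gives $\wp(\omega z)=\omega\wp(z)$ and $\wp'(\omega z)=\wp'(z)$; inserting the first identity into $\wp'^{2}=4\wp^{3}-g_2\wp-g_3$ yields $g_2(\rho)=0$ (the classical equianharmonic fact). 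Finally, $G(\omega z)=G(z)$ because $G$ depends only on the $\omega$-invariant lattice $\Lambda_\rho$, so $\omega\,G_z(\omega z)=G_z(z)$, i.e. $G_z(\omega z)=\omega^{2}G_z(z)$; and $G_z$ is odd since $G$ is even.

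I would first analyse the second equation in \eqref{new213}. As $g_2=0$ we have $\wp'^{2}=4\wp^{3}-g_3$, so $\wp'(a)+\wp'(b)=0$ forces $\wp(a)^{3}=\wp(b)^{3}$. The admissibility condition rules out $\wp(a)=\wp(b)$: this would give $a\equiv\pm b$, while $a\equiv b$ together with $\wp'(a)+\wp'(b)=0$ would make $\wp'(a)=0$, i.e. $a$ a half-period with $a=-a$, and $a\equiv-b$ is forbidden. Consequently $\wp(a)\neq0$ as well, and $\wp(b)/\wp(a)\in\{\omega,\omega^{2}\}$, that is $\wp(b)=\wp(\omega^{\pm1}a)$, so $b\equiv\pm\omega^{\pm1}a$. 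Since $\wp'(\omega^{\pm1}a)=\wp'(a)$ while $\wp'(b)=-\wp'(a)\neq0$, only the minus sign survives:
\[
b=-\omega a\quad\text{or}\quad b=-\omega^{2}a .
\]

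Next I would feed this into the first equation $G_z(a)+G_z(b)=0$. Using that $G_z$ is odd and $G_z(\omega^{k}a)=\omega^{2k}G_z(a)$, the relation $b=-\omega^{k}a$ $(k\in\{1,2\})$ gives $G_z(b)=-\omega^{2k}G_z(a)$, hence
\[
0=G_z(a)+G_z(b)=(1-\omega^{2k})\,G_z(a).
\]
As $\omega^{2k}\neq1$ this forces $G_z(a)=0$, so $a$ is a critical point of the Green function; moreover $a\neq0$, $a$ is not a half-period (because $a\neq-a$), and $\wp(a)\neq0$.

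The step I expect to be the main obstacle is then to rule out such a critical point. The order-$6$ rotation $z\mapsto\rho z$ is an automorphism of $E_\rho$ fixing $0$, so it preserves the critical set of $G$, and the orbit of $a$ is $\{a,\rho a,\dots,\rho^{5}a\}$. A coincidence $\rho^{i}a\equiv\rho^{j}a$ would require $(1-\rho^{k})a\in\Lambda_\rho$ for some $1\le k\le5$: for $k\in\{1,5\}$ this forces $a\in\Lambda_\rho$, for $k=3$ it forces $2a\in\Lambda_\rho$ (a half-period), and for $k\in\{2,4\}$ it forces $\wp(a)=0$. All three are excluded by the previous paragraph, so the orbit consists of six \emph{distinct} critical points of $G$. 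This contradicts the fact, due to Lin--Wang, that the Green function of a flat torus has at most five critical points. (Alternatively, since $E_\rho$ is a rhombic torus one may avoid quoting this bound and instead examine $G_z$ directly along the two symmetry axes of $E_\rho$, checking by an elementary one-variable monotonicity argument that each axis carries a single critical point, so that the only critical points are the three half-periods and the two zeros of $\wp$.) In either case no admissible $a$ exists, the system \eqref{new213} has no solution, and therefore \eqref{mfe-002} has none.
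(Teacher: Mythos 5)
Your proposal is correct and takes essentially the same route as the paper: exploit the equianharmonic symmetry of $E_{\rho}$ (homogeneity of $\wp$, $g_{2}(\rho)=0$, rotation invariance of $G$) to reduce \eqref{new213} first to $b=-\rho^{2}a$ or $b=-\rho^{4}a$ and then to $G_{z}(a)=0$, i.e.\ $a$ is a critical point of the Green function. The only (harmless) deviation is the endgame: the paper quotes from \cite{LW} the explicit list of the five critical points $\{\frac{1}{2}\omega_{1},\frac{1}{2}\omega_{2},\frac{1}{2}\omega_{3},\pm\frac{1}{3}\omega_{3}\}$ of $G(\cdot|\rho)$ and contradicts $a\notin\{-a,\pm b\}$ directly, whereas you manufacture six distinct critical points via the order-six rotation orbit (your distinctness checks via $a\neq 0$, $a\neq-a$, $\wp(a)\neq 0$ are all in order) and contradict the Lin--Wang bound of at most five critical points --- the same theorem of \cite{LW}, used through its count rather than its explicit conclusion.
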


\begin{proof}
Assume by contradiction that (\ref{mfe-002}) has a solution. Then there exist
$a,b \in E_{\rho}$ such that (\ref{new213}) holds, i.e.%
\[
G_{z}(a|\rho)+G_{z}(b|\rho)=0,\text{ }\wp^{\prime}(a|\rho)+\wp^{\prime}%
(b|\rho)=0\; \text{ and }\;a\notin \{-a,\pm b\}.
\]
It is known (cf. \cite{LW}) that $g_{2}(\rho)=0$ (see (\ref{new34}) for $g_2$) and $\wp(z|\rho)=\rho^{2}%
\wp(\rho z|\rho)$. Then by $\wp^{\prime}(a|\rho)^{2}=\wp^{\prime}(b|\rho)^{2}$
and (\ref{new34}) below, we obtain $\wp(a|\rho)^{3}=\wp(b|\rho)^{3}$, which
implies%
\[
\text{either \ }b=\pm \rho a\text{ \ or }b=\pm \rho^{2}a.
\]
On the other hand, $G(\rho z|\rho)=G(z|\rho)$ gives $\rho G_{z}(\rho
z|\rho)=G_{z}(z|\rho)$. Hence,%
\[
0=G_{z}(a|\rho)+G_{z}(b|\rho)=\left(  1\pm \rho^{-j}\right)  G_{z}%
(a|\rho)\text{ for some }j\in \{1,2\} \text{,}%
\]
which implies that $a$ is a critical point of $G(z|\rho)$ and so does $b$.
Recall from \cite{LW} that $G(z|\rho)$ has exactly five critical points $\{
\frac{1}{2}\omega_{1},\frac{1}{2}\omega_{2},\frac{1}{2}\omega_{3},\pm \frac
{1}{3}\omega_{3}\}$. So $a,b \in \{ \frac{1}{2}\omega_{1},\frac{1}{2}\omega
_{2},\frac{1}{2}\omega_{3},\pm \frac{1}{3}\omega_{3}\}$, a contradiction with
$\wp^{\prime}(a|\rho)+\wp^{\prime}(b|\rho)=0$ and $a\notin \{-a,\pm b\}$.
Therefore, (\ref{mfe-002}) has no solutions.
\end{proof}

From now on, we assume that $\tau\in i\mathbb{R}^+$, i.e. $E_{\tau}$ \textit{is a rectangle centered at the
origin}. Under this assumption, we will prove Theorem \ref{thm1}.

To prove Theorem \ref{thm1}, we will show that if $(a,b)$ is a solution of
\eqref{new213}, then both $a$ and $b$ lie in the same half plane, and then we
exclude this possibility by using the elementary properties of the Green
function $G$.

Our proof is elementary in the sense that only the basic theory of $\wp
(z|\tau)$ covered by the standard textbook (cf. \cite{A}) are used. For
example, the following lemma only uses some properties of $\wp(z|\tau)$ on rectangles.

\begin{lemma}
\label{11-9-3} Let $\omega_{2}=\tau \in i\mathbb{R}^{+}$. Then $\wp$ is one to
one from $(0,\frac{1}{2}\omega_{1}]\cup \lbrack \frac{1}{2}\omega_{1},\frac
{1}{2}\omega_{3}]\cup \lbrack \frac{1}{2}\omega_{3},\frac{1}{2}\omega_{2}%
]\cup \lbrack \frac{1}{2}\omega_{2},0)$ onto $(-\infty,+\infty)$. Here
$[z_{1},z_{2}]=\{z:z=tz_{2}+(1-t)z_{1},\;0\leq t\leq1\}$.
\end{lemma}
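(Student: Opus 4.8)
The plan is to exploit the conjugation symmetry of the lattice: since $\tau\in i\mathbb{R}^{+}$, the lattice $\Lambda_{\tau}$ is invariant under $z\mapsto\bar z$, so that $\overline{\wp(z|\tau)}=\wp(\bar z|\tau)$. Combining this with the evenness and double periodicity of $\wp$, I would first verify that $\wp$ is \emph{real-valued} on each of the four segments of the boundary path. On the real segment $(0,\tfrac12\omega_{1}]$ this is immediate; on the imaginary segment $[\tfrac12\omega_{2},0)$ one uses $\wp(\bar z)=\wp(-z)=\wp(z)$; and on the two oblique segments $[\tfrac12\omega_{1},\tfrac12\omega_{3}]$ and $[\tfrac12\omega_{3},\tfrac12\omega_{2}]$ one writes $\bar z=\omega_{1}-z$ and $\bar z=z-\omega_{2}$ respectively, so that periodicity and evenness again yield $\wp(\bar z)=\wp(z)$. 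Thus along the whole path $\wp$ takes values in $\mathbb{R}\cup\{\infty\}$.

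Next I would record the endpoint behaviour. The three corners $\tfrac12\omega_{1},\tfrac12\omega_{3},\tfrac12\omega_{2}$ are exactly the half-periods, where $\wp$ equals the three critical values $e_{1},e_{3},e_{2}$; these are real, distinct (the discriminant is nonzero), and, as explained below, ordered $e_{1}>e_{3}>e_{2}$. At the remaining corner $z=0$ the double pole contributes $+\infty$ as $z\to0$ along the real axis (where $\wp\sim z^{-2}$) and $-\infty$ as $z\to0$ along the imaginary axis (where $z^{-2}\sim -t^{-2}$). Monotonicity on the open segments I would obtain from the zero count of $\wp'$: as an elliptic function of order three, $\wp'$ has in a fundamental domain exactly the three simple zeros $\tfrac12\omega_{1},\tfrac12\omega_{2},\tfrac12\omega_{3}$, i.e.\ precisely the corners. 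Hence $\wp'$ never vanishes in the interior of any segment, so the real-analytic real-valued function obtained by restricting $\wp$ to each segment has nonvanishing derivative and is strictly monotone there. Reading off the endpoint values identifies the four image intervals as $[e_{1},+\infty)$, $[e_{3},e_{1}]$, $[e_{2},e_{3}]$ and $(-\infty,e_{2}]$; since consecutive segments share exactly one corner (mapping to the common critical value) and the ordering $e_{1}>e_{3}>e_{2}$ makes these intervals meet only at endpoints, their union is all of $(-\infty,+\infty)$ and the map is a bijection.

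The main obstacle is ensuring that the four monotone branches do not overlap beyond their shared endpoints, which is precisely the assertion $e_{1}>e_{3}>e_{2}$. I would derive this self-consistently from the identity
\[
\wp'(z)^{2}=4\prod_{k}\bigl(\wp(z)-e_{k}\bigr).
\]
On the real segment $\wp'$ is real, so $\prod_{k}(\wp-e_{k})\ge0$ throughout the range $[e_{1},+\infty)$; since this product is negative for $\wp$ just below the largest $e_{k}$, no $e_{k}$ can lie in $(e_{1},+\infty)$, forcing $e_{1}=\max_{k}e_{k}$. On the imaginary segment $\tfrac{d}{dt}\wp(it)=i\wp'(it)$ is real, so $\wp'$ is purely imaginary and $\prod_{k}(\wp-e_{k})\le0$ throughout the range $(-\infty,e_{2}]$; as the product is positive for $\wp$ just above the smallest $e_{k}$, no $e_{k}$ can lie in $(-\infty,e_{2})$, forcing $e_{2}=\min_{k}e_{k}$. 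Hence $e_{3}$ is the middle value and $e_{1}>e_{3}>e_{2}$, and the same sign information fixes the directions of monotonicity. The reality check on the two oblique boundary segments is the only genuinely delicate point; the remainder is a clean continuity-and-monotonicity assembly.
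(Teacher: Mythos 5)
Your proof is correct, but it takes a genuinely different route from the paper's on the key bijectivity step. The first half coincides: the paper also obtains reality of $\wp$ on the four segments from the conjugation symmetry $\overline{\wp(z)}=\wp(\bar z)$ (valid since $\tau\in i\mathbb{R}^{+}$) together with $\bar z\in\{z,\,-z,\,\omega_{1}-z,\,z-\omega_{2}\}$ on the respective segments, and the same pole asymptotics at $0$. But for injectivity the paper argues globally in one line: $\wp$ is even of degree two, so $\wp(z_{1})=\wp(z_{2})$ forces $z_{2}\equiv\pm z_{1}$ modulo $\Lambda_{\tau}$, and the boundary path contains no pair $\{z,-z\}$ other than the half-period fixed points; surjectivity then follows from the limits $\pm\infty$ at $0$ and continuity along the connected path. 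You instead prove strict monotonicity on each segment from the zero count of $\wp'$ (order three, simple zeros exactly at the half-periods) and then must establish the ordering $e_{2}<e_{3}<e_{1}$ to see that the four image intervals $[e_{1},+\infty)$, $[e_{3},e_{1}]$, $[e_{2},e_{3}]$, $(-\infty,e_{2}]$ meet only at endpoints. Your sign analysis of $\wp'(z)^{2}=4\prod_{k}(\wp(z)-e_{k})$ on the real and imaginary segments is a valid way to pin down $e_{1}=\max_{k}e_{k}$ and $e_{2}=\min_{k}e_{k}$ (using distinctness of the $e_{k}$, which follows from the nonvanishing discriminant or simply from $\deg\wp=2$), and it is worth noting that the paper does not prove this ordering inside the lemma at all: it records $e_{2}<e_{3}<e_{1}$ afterwards, in Remark \ref{re1-10-3}, as a \emph{consequence} of the lemma. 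So the logical dependence is reversed: you derive the ordering first and assemble the bijection from it, while the paper gets the bijection from the degree argument and reads the ordering off. The paper's route is shorter and needs no information about the $e_{k}$; yours is more quantitative, yielding the monotonicity directions and the ordering of the critical values (facts the paper uses later anyway) as byproducts of a self-contained argument.
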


\begin{proof}
By $\tau \in i\mathbb{R}^{+}$ and the definition of $\wp(z)$:
\begin{equation}
\wp(z)=\frac{1}{z^{2}}+\sum_{(m,n)\neq(0,0)}\Bigl(\frac{1}{(z-m-n\tau)^{2}%
}-\frac{1}{(m+n\tau)^{2}}\Bigr), \label{1-9-3}%
\end{equation}
it is easy to see that $\overline{\wp(z)}=\wp(\bar{z})$. Since $\bar{z}=z$ if
$z\in(0,\frac{1}{2}]$, $\bar{z}=-z$ if $z\in(0,\frac{\tau}{2}]$, $\bar{z}=1-z$
if $z\in \lbrack \frac{1}{2},\frac{1+\tau}{2}]$, $\bar{z}=z-\tau$ if
$z\in \lbrack \frac{1+\tau}{2},\frac{\tau}{2}]$, so $\wp$ is real-valued in
$(0,\frac{\tau}{2}]\cup \lbrack \frac{\tau}{2},\frac{1+\tau}{2}]\cup \lbrack
\frac{1+\tau}{2},\frac{1}{2}]\cup \lbrack \frac{1}{2},0)$.

On the other hand, since $\wp(z)=\wp(-z)$ and the degree of $\wp(z)$ is two,
we conclude that $\wp(z)$ is one to one in $(0,\frac{\tau}{2}]\cup \lbrack
\frac{\tau}{2},\frac{1+\tau}{2}]\cup \lbrack \frac{1+\tau}{2},\frac{1}{2}%
]\cup \lbrack \frac{1}{2},0)$. Moreover, since the second term in the RHS of \eqref{1-9-3} is bounded as $z\rightarrow0$, we conclude
\[
\lim_{\,[\frac{1}{2},0)\ni z\rightarrow0}\wp(z)=+\infty,\quad \lim
_{(0,\frac{\tau}{2}]\ni z\rightarrow0}\wp(z)=-\infty.
\]
The proof is complete.
\end{proof}

\begin{remark}
\label{re1-10-3} Let $e_k=\wp(\frac{\omega_k}{2})$, $k=1,2,3$. We recall that $\wp(z)$ satisfies the cubic equation:
{\allowdisplaybreaks
\begin{align}
\label{new34} &\wp'(z)^2 = 4\wp(z)^3 - g_2\wp(z) - g_3=4\prod_{k=1}^3(\wp(z)-e_k), \\  & \text{and}\;\;\wp''(z) = 6\wp(z)^2 - g_2/2. \nonumber\end{align}}%
Thus $e_{1} + e_{2} + e_{3}=0$. From Lemma \ref{11-9-3}, we have $e_{j}%
\in \mathbb{R}$, $e_{2}<e_{3}<e_{1}$ and $e_{2}<0<e_{1}$, also $\wp^{\prime
}(z)=\frac{\partial \wp(z)}{\partial x_{1}}\in \mathbb{R}$ if $z\in(0,\frac
{1}{2}\omega_{1}]\cup(\frac{1}{2}\omega_{2},\frac{1}{2}\omega_{3}]$, and
$\wp^{\prime}(z)=-i\frac{\partial \wp(z)}{\partial x_{2}}\in i\mathbb{R}$ if
$z\in(0,\frac{1}{2}\omega_{2}]\cup(\frac{1}{2}\omega_{1},\frac{1}{2}\omega
_{3}]$. Lemma~\ref{11-9-3} also implies $\zeta(z)\in \mathbb{R}$ for
$z\in(0,\frac{1}{2}\omega_{1}]$ and so $\eta_{1}\in \mathbb{R}$. In the following, we use $q_{\pm}$ to denote the solution of $\wp(q_{\pm})=\pm\sqrt{g_2/12}$, i.e. $\wp''(q_{\pm})=0$.
\end{remark}

Recall our assumption that $E_{\tau}$ is a rectangle centered at the
origin. We first discuss (\ref{new213}) by assuming $a\in \partial E_{\tau}$. To prove
Theorem 1.1 in this case, we will solve the second equation in (\ref{new213})
to obtain a branch $b=b(a)$, and then insert $b=b(a)$ in the first equation of
(\ref{new213}) to find a contradiction. For this purpose, we now discuss the
second equation in (\ref{new213}) with $a\neq-b$. We have the following lemma.

\begin{lemma}
\label{l1-30-1}
The equation $\wp^{\prime \prime}(a)=0$ has exactly four distinct
solutions $\pm q_{\pm}$, which all belong to $\partial E_{\tau}$ with
$q_{+}\in(\frac{1}{2}\omega_{1},\frac{1}{2}\omega_{3})$ and $q_{-}\in(\frac
{1}{2}\omega_{2},\frac{1}{2}\omega_{3})$. Moreover, for any $a\in E_{\tau
}\setminus \{ \pm q_{\pm},\pm2q_{\pm}\}$, there are two distinct solutions $b$'s
to the equation
\[
\wp^{\prime}(a)+\wp^{\prime}(b)=0,\quad a\neq-b.
\]
\end{lemma}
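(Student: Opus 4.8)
The plan is to treat the two assertions separately, using only \eqref{new34}, Lemma~\ref{11-9-3}, and the ordering of the $e_k$ recorded in Remark~\ref{re1-10-3}.

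For the inflection points I would start from the second identity in \eqref{new34}, $\wp''(z)=6\wp(z)^2-g_2/2$, so that $\wp''(z)=0$ is equivalent to $\wp(z)^2=g_2/12$. Since $e_1+e_2+e_3=0$ one has $g_2/2=e_1^2+e_2^2+e_3^2>0$, hence $A:=\sqrt{g_2/12}>0$ is a well-defined positive real and $\wp''(z)=0\iff\wp(z)=\pm A$. The crux is to locate $A$ relative to $e_2<e_3<e_1$. I would establish the three sign inequalities
\[
6e_1^2-\tfrac{g_2}{2}>0,\qquad 6e_2^2-\tfrac{g_2}{2}>0,\qquad 6e_3^2-\tfrac{g_2}{2}<0,
\]
by substituting $g_2/2=e_1^2+e_2^2+e_3^2$ and $e_2=-e_1-e_3$, which turns each inequality into a quadratic in $e_1,e_3$ that factors (up to an overall sign) as $(2e_1+e_3)(e_1-e_3)$, $(2e_1+e_3)(e_1+2e_3)$, and $(e_1-e_3)(2e_3+e_1)$ respectively. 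The ordering $e_2<e_3<e_1$, $e_2<0<e_1$ forces $e_1-e_3>0$ and $2e_3+e_1>0$ (the latter being exactly the condition $e_2<e_3$), so all signs come out as claimed. These inequalities say precisely $e_3<A<e_1$ and $e_2<-A<e_3$.

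I would then invoke Lemma~\ref{11-9-3}: $\wp$ is a bijection from the boundary path onto $(-\infty,+\infty)$, decreasing monotonically, with $\wp=e_1,e_3,e_2$ at $\tfrac12\omega_1,\tfrac12\omega_3,\tfrac12\omega_2$. Since $A\in(e_3,e_1)=\wp\bigl((\tfrac12\omega_1,\tfrac12\omega_3)\bigr)$, the unique boundary preimage $q_+$ of $A$ lies in $(\tfrac12\omega_1,\tfrac12\omega_3)$; likewise $-A\in(e_2,e_3)$ gives $q_-\in(\tfrac12\omega_2,\tfrac12\omega_3)$. Because $\wp$ is even, $\pm q_+,\pm q_-$ are zeros of $\wp''$; as $\wp''$ is elliptic of order $4$ these account for all its zeros, and they are distinct since $A\neq-A$ and none of $q_\pm$ is $2$-torsion (the three inequalities evaluated at the half-periods give $\wp''(\tfrac{\omega_k}2)\neq0$). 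For the second assertion, fix $a$ with $a\neq0$ (automatic in our application, where $a$ is not $2$-torsion) and set $c:=-\wp'(a)$. I solve $\wp'(b)=c$: as $\wp'-c$ is elliptic of order $3$ it has exactly three zeros with multiplicity, summing to $0\bmod\Lambda_\tau$. One zero is always $b=-a$, so the other two $b_1,b_2$ satisfy $b_1+b_2\equiv a$. A zero is multiple exactly when $\wp''$ vanishes there, i.e.\ when it lies in $\{\pm q_+,\pm q_-\}$, and such roots are honest double (not triple) roots because $\wp'''=12\wp\wp'\neq0$ at $q_\pm$. Hence $-a$ is simple once $a\notin\{\pm q_+,\pm q_-\}$, while $b_1=b_2$ forces $b_1\in\{\pm q_+,\pm q_-\}$ and $a\equiv2b_1\in\{\pm2q_+,\pm2q_-\}$; excluding $a\in\{\pm q_\pm,\pm2q_\pm\}$ leaves three distinct zeros $-a,b_1,b_2$, that is, exactly two distinct solutions $b_1\neq b_2$ with $b\neq-a$.

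The step I expect to be the main obstacle is the middle one: establishing the three sign inequalities that place $A=\sqrt{g_2/12}$ strictly between the appropriate $e_k$. Everything else is bookkeeping with the bijectivity of Lemma~\ref{11-9-3} and the order of $\wp'-c$, but without the precise placement of $A$ one cannot assign $q_\pm$ to the stated segments, and it is exactly these factorizations that make the elementary argument succeed.
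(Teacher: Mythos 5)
Your proposal is correct, and its first half coincides with the paper's argument: the paper proves exactly your interlacing inequalities $e_2<-\sqrt{g_2/12}<e_3<\sqrt{g_2/12}<e_1$ (its \eqref{fc1-1}), via the identity $g_2=4(e_i^2+e_ie_3+e_3^2)$ and a case split on the sign of $e_3$, which is the same algebra you organize by substituting $e_2=-e_1-e_3$ and factoring into $(2e_1+e_3)(e_1-e_3)$ etc.; both then place $q_\pm$ on the stated boundary segments through the bijection of Lemma~\ref{11-9-3} (so your worry that this step is the ``main obstacle'' is well calibrated --- it is the content of the paper's claim \eqref{fc1-1}). Where you genuinely diverge is in identifying the exceptional set $\{\pm 2q_\pm\}$. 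The paper argues algebraically: from $\wp'(a)^2=\wp'(b)^2$ with $b\in\{\pm q_\pm\}$ it derives $\wp(a)^2+\wp(a)\wp(b)+\wp(b)^2-g_2/4=0$ (its \eqref{10-22-1}), solves to get $\wp(a)=-2\wp(b)$, and then invokes the duplication formula $\wp(2z)=\frac14\bigl(\wp''(z)/\wp'(z)\bigr)^2-2\wp(z)$ with $\wp''(q_\pm)=0$ to conclude $\wp(a)=\wp(2b)$, hence $a=\pm 2b$. You instead use Abel's relation: the zeros of the order-three function $\wp'-c$ sum to $0$ modulo $\Lambda_\tau$, so a coincidence $b_1=b_2\in\{\pm q_\pm\}$ forces $a\equiv 2b_1$ directly in $E_\tau$. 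Your route is slightly cleaner (it yields $a\equiv 2b_1$ outright, avoiding the passage through $\wp(a)=\wp(2b)$ and the sign ambiguity $a=\pm2b$), while the paper's route keeps the argument inside the addition-formula toolkit it reuses later (e.g.\ \eqref{fc1-4} in Lemma~\ref{lemma4-7}, where the location of $2q_+$ actually matters). Two further points in your favor: your checks that the four zeros of $\wp''$ are distinct and that roots in $\{\pm q_\pm\}$ are double but not triple (via $\wp'''=12\wp\wp'\neq 0$ at $q_\pm$) are details the paper leaves implicit, and your explicit restriction $a\neq 0$ correctly flags that the lemma's statement silently assumes $\wp'(a)$ is finite, which is automatic in the application since $a$ is a blow-up point.
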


\begin{proof}
From (\ref{new34}), $\wp^{\prime \prime}(z)=0$ has $4$ zeros at $\pm q_{+}, \pm q_{-}$,
where $\wp(q_{\pm})=\pm \sqrt{g_{2}/12}$, and
\begin{equation}
e_{1}+e_{2}+e_{3}=0,\quad e_{1}e_{2}+e_{1}e_{3}+e_{2}e_{3}=-\frac{g_{2}}%
{4},\quad e_{1}e_{2}e_{3}=\frac{g_{3}}{4}, \label{nn1-3-2}%
\end{equation}
which implies $g_{2}=2(e_{1}^{2}+e_{2}^{2}+e_{3}^{2})>0$. So $\wp(q_{\pm}%
)\in \mathbb{R}$. We claim
\begin{equation}
e_{2}<-\sqrt{g_{2}/12}<e_{3}<\sqrt{g_{2}/12}<e_{1}.
\label{fc1-1}%
\end{equation}
Then it follows that $q_{+}\in(\frac{1}{2}\omega_{1},\frac{1}{2}\omega_{3})$
and $q_{-}\in(\frac{1}{2}\omega_{2},\frac{1}{2}\omega_{3})$, i.e. $\pm
q_{\pm}\in \partial E_{\tau}$.

Since $e_{1}+e_{2}+e_{3}=0$ and $e_{2}<e_{3}<e_{1}$ by Remark \ref{re1-10-3},
we have $e_{2}<0$, $e_{1}>0$ and $|e_{3}|<\min \{|e_{2}|, e_{1}\}$. Thus, for
$i=1$ or $i=2$,
\[
g_{2}=2(e_{1}^{2}+e_{2}^{2}+e_{3}^{2})=4(e_{i}^{2}+e_{i}e_{3}+e_{3}%
^{2})<12e_{i}^{2},
\]
namely $e_{2}<-\sqrt{g_{2}/12}$ and $e_{1}>\sqrt{g_{2}/12}$.
If $e_{3}\le0$, then $g_{2}=4(e_{2}^{2}+e_{2}e_{3}+e_{3}^{2})> 12e_{3}^{2}$;
if $e_{3}>0$, then $g_{2}=4(e_{1}^{2}+e_{1}e_{3}+e_{3}^{2})> 12e_{3}^{2}$.
Therefore, $|e_{3}|<\sqrt{g_{2}/12}$, namely (\ref{fc1-1}) holds.

For any $a\in E_{\tau}$, $\wp^{\prime}(z)=-\wp^{\prime}(a)$ has three
solutions, because the degree of the map $\wp^{\prime}$ from $E_{\tau}$ to
$\mathbb{C}\cup \{ \infty \}$ is three. Note that $\wp^{\prime \prime}(z)=0$ if
and only if $z=\pm q_{\pm}$. Thus $\wp^{\prime}(a)+\wp^{\prime}(b)=0$ has
three distinct solutions $b$'s except for those $a$'s such that $\wp^{\prime
}(a)+\wp^{\prime}(\pm q_{\pm})=0$ for some $\pm q_{\pm}$. To find such $a$, we
note that%
\[
\wp^{\prime}(a)^{2}=\wp^{\prime}(b)^{2},\quad \text{for some $b\in \{ \pm
q_{+}, \pm q_{-}\}$}.
\]
It suffices to consider the case $a\notin \{ \pm q_{\pm}\}$. Then $\wp
(a)\neq \wp(b)$. By using%
\begin{equation}
\wp^{\prime}(z)^{2}=4\wp(z)^{3}-g_{2}\wp(z)-g_{3} \label{1-3-2}%
\end{equation}
at $z=a$ and $z=b$, we have
\begin{equation}
\wp(a)^{2}+\wp(a)\wp(b)+\wp(b)^{2}-\frac{g_{2}}{4}=0. \label{10-22-1}%
\end{equation}
Recalling $\wp(b)=\pm \sqrt{g_{2}/12}$ for $b\in \{ \pm q_{\pm}\}$, we
get
\begin{equation}
\wp(a)=\frac{-\wp(b)\pm \sqrt{g_{2}-3\wp(b)^{2}}}{2}=\frac{-\wp(b)\pm3\wp
(b)}{2}. \label{fc1-2}%
\end{equation}
This, together with $\wp(a)\neq \wp(b)$, gives $\wp(a)=-2\wp(b)$. From the
addition formula $\wp(2z)=\frac{1}{4}(\frac{\wp^{\prime \prime}(z)}{\wp
^{\prime}(z)})^{2}-2\wp(z)$ and $\wp^{\prime \prime}(b)=0$ for $b\in \{ \pm
q_{\pm}\}$, we get $\wp(a)=\wp(2b)$. Therefore, $a\in \{ \pm2q_{\pm}\}$. This
completes the proof.
\end{proof}

\begin{remark}
\label{11-11-1} We have proved $q_{+}\in(\frac{1}{2}\omega_{1},\frac{1}{2}\omega_{3})$ and $q_{-}\in(\frac
{1}{2}\omega_{2},\frac{1}{2}\omega_{3})$. From $e_{1}+e_{2}+e_{3}=0$, we have $g_{2}=2(e_{1}^{2}%
+e_{2}^{2}+e_{3}^{2}) >3\max\{e^{2}_{1},e^{2}_{2}\}$, which implies $\wp
(2q_{+})=-2\wp(q_{+})=-\sqrt{g_2/3} < e_{2}$ and $\wp(2q_{-})=-2\wp(q_{-})=\sqrt{g_2/3}>e_{1}$. Hence
$2q_{+} \in(0,\frac{\omega_{2}}{2})\cup(-\frac{\omega_{2}}{2},0)$ and $2q_{-} \in(0,\frac{\omega_{1}}{2})\cup(-\frac{\omega_{1}}{2},0)$. We will prove in Lemma \ref{lemma4-7} that $2q_{+} \in(0,\frac{\omega_{2}}{2})$.
\end{remark}

\begin{lemma}
\label{l1-22-1} There is no pair $(a,b)$ with $a$ or $b\in \partial E_{\tau}$,
such that (\ref{new213}) holds.
\end{lemma}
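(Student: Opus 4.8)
The plan is to assume, after possibly interchanging $a$ and $b$, that $a\in\partial E_\tau$, and then to run the branch method announced before the lemma: solve $\wp'(a)+\wp'(b)=0$ for $b=b(a)$ and substitute the result into $G_z(a)+G_z(b)=0$. Since the constraint $a\notin\{-a\}$ excludes the half-periods, we have $\wp'(a)\neq0$. Viewing $\partial E_\tau$ on the torus as the two circles $V=\{\operatorname{Re}z=\tfrac12\}$ and $W=\{\operatorname{Im}z=\tfrac12\operatorname{Im}\tau\}$, I would first record the reality structure along the boundary and the two coordinate axes. From $\overline{\wp'(z)}=\wp'(\bar z)$, the representation \eqref{40}, and Legendre's relation $\eta_2=\tau\eta_1-2\pi i$ (so that $\eta_1\in\mathbb R$ and $\eta_2\in i\mathbb R$), one checks that both $\wp'$ and $G_z$ are purely imaginary on $V$ and on the imaginary axis, and real on $W$ and on the real axis. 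In particular, on each vertical symmetry line $G_{x_1}\equiv0$, so $G_z=-\tfrac i2 G_{x_2}$, while on each horizontal one $G_{x_2}\equiv0$, so $G_z=\tfrac12 G_{x_1}$; thus along every segment in play $G_z$ is a fixed constant times the tangential derivative of $G$.

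Next I would solve $\wp'(b)=-\wp'(a)$. Since $\wp'$ has degree three there are three preimages; one is the excluded $b=-a$, and the relation $b_1+b_2\equiv a$ for the remaining two, combined with the reality of $\wp'$ and the monotonicity of Lemma~\ref{11-9-3}, locates them precisely: for $a\in V$ one admissible solution again lies on $V$ (in the opposite half), while the other lies on the imaginary axis (in the same half as $a$); the case $a\in W$ is entirely analogous. This gives the branch $b=b(a)$.

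I would then feed each branch into $G_z(a)+G_z(b)=0$; for $a\in V$ both points lie on vertical symmetry lines, so this reduces to $H(a)+H(b)=0$ with $H:=G_{x_2}$ (and symmetrically $H:=G_{x_1}$ when $a\in W$). A convenient reformulation, from the $\zeta$ addition formula and \eqref{40}, is
\[
G_z(a)+G_z(b)=G_z(a+b)+\frac{1}{4\pi}\,\frac{\wp'(a)}{\wp(a)-\wp(b)},
\]
which evaluates $G_z$ at the single point $a+b$, itself lying on a symmetry line. For the same-half branch ($b$ on the imaginary axis) the key fact is that on a rectangular torus $G$ has no critical points other than the three half-periods (cf.\ \cite{LW}), so $G$ is strictly monotone along each half of $V$ and of the axes; hence $H(a)$ and $H(b)$ carry the same sign and cannot cancel. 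For the opposite-half branch ($b$ on $V$), oddness of $H$ turns the equation into $H(\alpha)=H(\beta)$ with $\alpha,\beta$ in the same half, while $\wp'(a)+\wp'(b)=0$ simultaneously forces $\psi(\alpha)=\psi(\beta)$, where $\psi:=-i\wp'$. Since $\wp$ is a monotone coordinate on that half, this is exactly a self-intersection of the planar curve $w\mapsto(\psi(w),H(w))$ with $w=\wp\in(e_3,e_1)$; ruling it out forces $\alpha=\beta$, i.e.\ $b=-a$, the excluded case.

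The main obstacle is precisely this non-self-intersection. Both $\psi$ and $H$ are unimodal in $w$ on $(e_3,e_1)$: from \eqref{new34} one has $\psi=\sqrt{-4(w-e_1)(w-e_2)(w-e_3)}$, which peaks at $w=\sqrt{g_2/12}$ (the point $q_+$), whereas $H$ attains its extremum where $w=2\pi/\operatorname{Im}\tau-\eta_1$. But $H$ is transcendental in $w$ (it carries the $\zeta$-part of \eqref{40}), so convexity is unavailable and the two unimodal profiles must be compared by hand. I expect the decisive step to be a fixed-sign statement for $dH/d\psi$, equivalently for an auxiliary combination of $\zeta$ and $\wp$ built from \eqref{40}, established uniformly for all $\tau\in i\mathbb R^+$. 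Once that monotonicity is in place, the leftover pieces—the case $a\in W$ and the sign bookkeeping at the endpoints—are routine adaptations.
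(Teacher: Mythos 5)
Your skeleton matches the paper's: the three roots of $\wp'(b)=-\wp'(a)$ with $b=-a$ excluded, the (correct) relation $b_2+b_3\equiv a$, the location of the two admissible branches (one on the opposite half of the same boundary line, one on a coordinate axis in the same half as $a$), and the reduction of $G_z(a)+G_z(b)=0$ to a single real equation in $G_{x_2}$ (resp.\ $G_{x_1}$). Your treatment of the same-half axis branch is correct and in fact simpler than the paper's: once $b_3(a)$ is known to lie on $(0,\frac12\omega_2]$, the fixed sign of $G_{x_2}$ in the upper half-rectangle (quoted from \cite{CLW4}) kills cancellation at once, whereas the paper counts zeros of $H_3'$. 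But note that this location is not free: that $b_3(q_+)=2q_+$ rather than $-2q_+$, i.e.\ $2q_+\in(0,\frac12\omega_2)$, is exactly the claim \eqref{fc1-3}, which the paper proves via the addition formula \eqref{fc1-4}; your ``reality plus monotonicity'' does not distinguish the two halves of the imaginary axis, so you would need either that computation or a genuine continuity/no-collision argument along the branch, neither of which you give.

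The decisive gap is the opposite-half branch, which is the heart of the lemma, and you leave it unproven: you reduce it to non-self-intersection of the curve $w\mapsto(\psi(w),H(w))$ and then only ``expect'' a fixed-sign statement for $dH/d\psi$. That expected step cannot work as stated. Writing $H_{\pm}$ for the two fold branches of $H$ as functions of $\psi\in[0,\psi_{\max}]$ (the fold being at $q_+$), the difference $H_+-H_-$ vanishes at \emph{both} ends: at $\psi=0$ because the endpoints $\frac12\omega_1,\frac12\omega_3$ are half-periods, hence critical points of $G$, and at $\psi=\psi_{\max}$ because there $\alpha=\beta=q_+$. By Rolle its $\psi$-derivative must change sign, so no monotonicity or fixed-sign statement can rule out interior zeros; what is needed is an \emph{exact count} of interior critical points. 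This is precisely what the paper supplies algebraically: from the second-derivative formulas \eqref{201-10-3}--\eqref{201-10-5} for $G$ and the implicit derivative \eqref{fc1-6} of the branch, the equation $H_2'(a)=0$ reduces to the quadratic \eqref{fc1-7}, i.e.\ $f_2(a)=B_{\pm}$ with $f_2(a)=\wp(a)+\wp(b_2(a))$; the a priori bound $f_2\geq-e_2>0$ of Lemma \ref{lemma4-8}$(i)$ (itself proved by showing $q_+$ is the unique critical point of $f_2$) discards $B_-$, and then \eqref{10-22-1} pins $\wp(a)$ to at most two values, so by injectivity of $\wp$ on the segment $H_2'$ has exactly two zeros and $H_2$ vanishes only at the excluded configurations $\frac12\omega_1,q_+,\frac12\omega_3$. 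None of this machinery appears in your proposal (your identity $G_z(a)+G_z(b)=G_z(a+b)+\frac{1}{4\pi}\frac{\wp'(a)}{\wp(a)-\wp(b)}$ is correct but goes unused), and without it the non-self-intersection claim --- hence the lemma --- is not established.
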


\begin{proof}
Assume by contradiction that such $(a,b)$ exists. Since the degree of $\wp(z)$
is two and $\wp(-z)=\wp(z)$, we know that $\wp(a)\neq \wp(b)$ because of
$a\neq\pm b$. Then just as in Lemma \ref{l1-30-1}, it follows from $\wp^{\prime}%
(a)+\wp^{\prime}(b)=0$ that \eqref{10-22-1} holds for
$(\wp(a),\wp(b))$.

Without loss of generality, we assume $a\in \partial E_{\tau}$. From
\eqref{10-22-1}, we find%
\begin{equation}
\wp(b)=\frac{-\wp(a)\pm \sqrt{g_{2}-3\wp(a)^{2}}}{2}. \label{fc1-5}%
\end{equation}
We claim
\begin{equation}
\label{eq310}g_{2}-3\wp(a)^{2}>0\  \  \text{for any}\ a\in \partial E_{\tau}.
\end{equation}
From $\wp(-z)=\wp(z)$ and $\wp(z+\omega_{j})=\wp(z)$, $j=1,2$, we only need to
prove the claim for $a\in \lbrack \frac{1}{2}\omega_{2},\frac{1}{2}\omega
_{3}]\cup \lbrack \frac{1}{2}\omega_{1},\frac{1}{2}\omega_{3}]$. Let us assume
$a\in \lbrack \frac{1}{2}\omega_{1},\frac{1}{2}\omega_{3}]$. Then $e_{3}\leq
\wp(a)\leq e_{1}$. If $\wp(a)\leq0$, then from \eqref{nn1-3-2} and $e_{1}%
e_{2}<0$, we have
\begin{equation}
g_{2}=-4(e_{1}e_{2}+e_{3}(e_{1}+e_{2}))=4(e_{3}^{2}-e_{1}e_{2})>4e_{3}%
^{2}>3\wp(a)^{2}. \label{10-10-3}%
\end{equation}
On the other hand, if $\wp(a)>0$, by $e_{1}^{2}-4e_{2}e_{3}=(e_{2}-e_{3}%
)^{2}>0$, we have
\begin{equation}
g_{2}=4(e_{1}^{2}-e_{2}e_{3})>3e_{1}^{2}\geq3\wp(a)^{2}. \label{11-10-3}%
\end{equation}
Suppose now $a\in \lbrack \frac{1}{2}\omega_{2},\frac{1}{2}\omega_{3}]$. Then
$e_{2}\le\wp(a)\le e_{3}$. If $\wp(a)>0$, then \eqref{10-10-3} gives $g_{2}%
>3\wp(a)^{2}$. If $\wp(a)\leq0$, then similar to \eqref{11-10-3}, we have
\[
g_{2}=4(e_{2}^{2}-e_{1}e_{3})>3e_{2}^{2}\geq3\wp(a)^{2}.
\]
So, the claim (\ref{eq310}) follows. Since $\wp(a)\in \mathbb{R}$, by the claim and (\ref{fc1-5})
we also have $\wp(b)\in \mathbb{R}$.

To prove Lemma 3.7, let us argue for the case $a\in \bigl(\frac{1}{2}%
(\omega_{1}-\omega_{2}),\frac{1}{2}\omega_{1}\bigr)\cup \bigl(\frac{1}{2}%
\omega_{1},\frac{1}{2}\omega_{3}\bigr)$, which are two intervals on the line
$\frac{1}{2}\omega_{1}+i\mathbb{R}$. Without loss of generality, we may assume
$a\in \lbrack \frac{1}{2}\omega_{1},\frac{1}{2}\omega_{3}]$. Lemma~\ref{l1-30-1}
and Remark 3.6 tell us that there are three branch solutions $b_{i}(a)$,
$i=1,2,3$, of $\wp^{\prime}(a)+\wp^{\prime}(b)=0$ for $a\in \lbrack \frac{1}%
{2}\omega_{1},\frac{1}{2}\omega_{3}]\backslash \{q_{+}\}$, where we assign
$b_{1}(a)=-a$ for any $a$. To continue our proof, we need two lemmas to study
the basic properties of the other two branches.

\begin{lemma}
\label{lemma4-7} For $a\in[\frac{1}{2}w_{1}, \frac{1}{2}w_{3}]$, there are two
analytic branches $b_{2}(a)$ and $b_{3}(a)$ of solutions to $\wp^{\prime
}(a)+\wp^{\prime}(b)=0$ such that $b_{2}(\frac12 \omega_{1})=-\frac12
\omega_{3}$, $b_{2}(q_{+})=-q_{+}$ and $b_{2}(\frac12 \omega_{3})=-\frac12
\omega_{1}$, $b_{3}(\frac{1}{2}\omega_{1})=\frac{1}2 \omega_{2}$, $b_{3}%
(q_{+})=2q_{+}$ and $b_{3}(\frac12 \omega_{3})=\frac12 \omega_{2}$.
Furthermore, $b_{2}(a)\in[-\frac{1}{2}\omega_{3}, -\frac12 \omega_{1}]$ and
$b_{3}(a)\in[2q_{+}, \frac12 \omega_{2}]$, $2q_{+}\in(0, \frac12\omega_{2})$.
\end{lemma}

\begin{proof}
For $a\in \lbrack \frac{1}{2}\omega_{1},q_{+})$, there exist two analytic branch
solutions $b_{2}(a)$ and $b_{3}(a)$ for $\wp^{\prime}(a)+\wp^{\prime}(b)=0$.
Since $\wp^{\prime}(\frac{1}{2}\omega_{1})=0$, we have $\wp^{\prime}%
(b(\frac{1}{2}\omega_{1}))=0$. Hence, $b(\frac{1}{2}\omega_{1})=\frac{1}%
{2}\omega_{2}$ or $b(\frac{1}{2}\omega_{1})=-\frac{1}{2}\omega_{3}$ since
$a\neq \pm b$. Here, we assume $b_{2}(\frac{1}{2}\omega_{1})=-\frac{1}{2}%
\omega_{3}$ and $b_{3}(\frac{1}{2}\omega_{1})=\frac{1}{2}\omega_{2}$. By
Lemma~\ref{11-9-3}, $\wp(a)$ is decreasing in $[\frac{1}{2}\omega_{1},\frac
{1}{2}\omega_{3}]$, we find $\wp^{\prime}(a)\in i\mathbb{R}^{+}$ for
$a\in(\frac{1}{2}\omega_{1},\frac{1}{2}\omega_{3})$, which gives $\wp^{\prime
}(b_{i}(a))\in i\mathbb{R}^{-}$. By \eqref{fc1-5}, $g_{2}-3\wp(a)^{2}>0$ and
Lemma~\ref{11-9-3}, we find $\wp(b_{i}(a))\in \mathbb{R}$. Together with Remark \ref{re1-10-3}, we conclude that
\[
b_{2}:[\tfrac{1}{2}\omega_{1},q_{+})\rightarrow-\tfrac{1}{2}\omega
_{3}+i\mathbb{R}^{+},
\]
and
\[
b_{3}:[\tfrac{1}{2}\omega_{1},q_{+})\rightarrow \lbrack \tfrac{1}{2}\omega
_{2},0).
\]

First, we note that $b_{2}$ is one-to-one for $a\in \lbrack \frac{1}{2}%
\omega_{1},q_{+})$, because if $b_{2}(a)=b_{2}(\tilde{a})$ for some
$a,\tilde{a}\in \lbrack \frac{1}{2}\omega_{1},q_{+})$, then $\wp^{\prime
}(a)=-\wp^{\prime}(b_{2}(a))=-\wp^{\prime}(b_{2}(\tilde{a}))=\wp^{\prime
}(\tilde{a})$, which implies $a=\tilde{a}$, since $\wp^{\prime \prime}\neq0$ on
$[\frac{1}{2}\omega_{1},q_{+})$. Similarly, $b_{3}$ is one-to-one for
$a\in \lbrack \frac{1}{2}\omega_{1},q_{+})$. By one-to-one, $b_{2}(a)$ is
increasing from $b_{2}(\frac{1}{2}\omega_{1})=-\frac{1}{2}\omega_{3}$ to
$b_{2}(q_{+})=\lim_{a\rightarrow q_{+}}b_{2}(a)$ as $a$ varies from $\frac
{1}{2}\omega_{1}$ to $q_{+}$. The previous proof of Lemma \ref{l1-22-1} shows
that (\ref{10-22-1}) holds for $a\in \lbrack \frac{1}{2}\omega_{1},q_{+})$ and
$b_{2}(a)$. By letting $a\rightarrow q_{+}$, we also have that $(q_{+}%
,b_{2}(q_{+}))$ satisfies (\ref{10-22-1}). Then similarly to (\ref{fc1-2}), we
obtain
\[
\wp(b_{2}(q_{+}))=\frac{-\wp(q_{+})\pm3\wp(q_{+})}{2},
\]
namely either $\wp(b_{2}(q_{+}))=\wp(q_{+})$ or $\wp(b_{2}(q_{+}))=-2\wp(q_{+}%
)=\wp(2q_{+})$ because $\wp^{\prime \prime}(q_{+})=0$. Since $b_{2}(q_{+}%
)\in-\frac{1}{2}\omega_{3}+i\mathbb{R}^{+}$ and $2q_{+}\in \omega
_{1}+i\mathbb{R}=i\mathbb{R}$ in the torus $E_{\tau}$, we conclude that
$b_{2}(q_{+})=-q_{+}$.

The above argument also shows $\wp(b_{3}(q_{+}))=-2\wp(q_{+})=\wp(2q_{+})$. So
we have either $b_{3}(q_{+})=2q_{+}$ or $b_{3}(q_{+})=-2q_{+}$. We claim
\begin{equation}
\label{fc1-3}b_{3}(q_{+})=2q_{+}.
\end{equation}

Recalling $b_3(\frac{1}{2}\omega_{1})=\frac{1}{2}\omega_{2}$, (\ref{fc1-3}) is equivalent to $2q_{+}\in(0,\frac{1}{2}\omega_{2})$. So it
suffices to prove $q_{+}\in(\frac{1}{2}\omega_{1},\frac{1}{2}\omega_{1}%
+\frac{1}{4}\omega_{2})$ or equivalently, to show $\wp(q_{+})>\wp(\frac{1}%
{2}\omega_{1}+\frac{1}{4}\omega_{2})$.
We use the following addition formula
to prove this inequality:%
\begin{equation}
\wp(2z)+2\wp(z)=\frac{1}{4}\left(  \frac{\wp^{\prime \prime}(z)}{\wp^{\prime
}(z)}\right)  ^{2}. \label{fc1-4}%
\end{equation}
Because $0\neq \wp^{\prime}(\frac{1}{2}\omega_{1}+\frac{1}{4}\omega_{2})\in
i\mathbb{R}$ and $\wp^{\prime \prime}(\frac{1}{2}\omega_{1}+\frac{1}{4}%
\omega_{2})\in \mathbb{R}$, (\ref{fc1-4}) gives
\[
2\wp(\tfrac{1}{2}\omega_{1}+\tfrac{1}{4}\omega_{2})\leq-\wp(\tfrac{1}{2}%
\omega_{2})=-e_{2}<2\wp(q_{+}),
\]
where the last inequality follows from Remark \ref{11-11-1}. Hence (\ref{fc1-3}) is proved.

It is easy to see that these two branches $b_{2}(a)$ and $b_{3}(a)$ can be
extended from $[\frac{1}{2}\omega_{1},q_{+}]$ to $[\frac{1}{2}\omega_{1}%
,\frac{1}{2}\omega_{3}]$ such that for $a\in(q_{+},\frac{1}{2}\omega_{3}]$,
$b_{2}(a)\in(-q_{+},-\frac{1}{2}\omega_{1}]$ and $b_{3}(a)\in(2q_{+},\frac
{1}{2}\omega_{2}]$. This completes the proof.
\end{proof}

\begin{lemma}
\label{lemma4-8} For $a\in[\frac12 \omega_{1}, \frac12 \omega_{3}]$, the
following statements hold:

\begin{itemize}
\item[$(i)$] { $-e_{2}\le \wp(a)+\wp(b_{2}(a))\le2\wp(q_{+});$ }

\item[$(ii)$] { $-e_{1}\le \wp(a)+\wp(b_{3}(a))\le-e_{3}$. }
\end{itemize}
\end{lemma}

\begin{proof}
We define $f_{i}(a):=\wp(a)+\wp(b_{i}(a))$, $i=2, 3$. Then for $a\in(\frac12
\omega_{1}, \frac12 \omega_{3})$,
\[
f_{i}^{\prime}(a)=\wp^{\prime}(a)+\wp^{\prime}(b_{i}(a))b_{i}^{\prime}%
(a)=\wp^{\prime}(a)(1-b_{i}^{\prime}(a)).
\]
Note that $\wp^{\prime}(a)\neq0$ for $a\in(\frac12 \omega_{1}, \frac12
\omega_{3})$. Assume that $\bar{a}\in(\frac12 \omega_{1}, \frac12 \omega_{3})$
is a critical point of $f_{i}$. Then $b_{i}^{\prime}(\bar a)=1$. By the
arguments in the proof of Lemma \ref{lemma4-7}, we know that (\ref{10-22-1})
holds for $(\wp(a), \wp(b_{i}(a)))$. Differentiating over (\ref{10-22-1}), we
easily conclude that
\[
[\wp(a)+2\wp(b_{i}(a))]b_{i}^{\prime}(a)=2\wp(a)+\wp(b_{i}(a)).
\]
Recalling (\ref{fc1-5}), we have $\wp(a)+2\wp(b_{i}(a))=\pm \sqrt{g_{2}%
-3\wp(a)^{2}}\neq0$. Thus,
\begin{equation}
\label{fc1-6}b_{i}^{\prime}(a)=\frac{2\wp(a)+\wp(b_{i}(a))}{\wp(a)+2\wp
(b_{i}(a))}.
\end{equation}
Letting $a=\bar a$ in (\ref{fc1-6}), we obtain $\wp(b_{i}(\bar a))=\wp(\bar
a)$. This, together with (\ref{fc1-5}), gives
\[
\wp(\bar a)=\wp(b_{i}(\bar a))=\frac{-\wp(\bar a)\pm \sqrt{g_{2}- 3\wp^{2}(\bar
a) } }2,
\]
which implies $\wp(b_{i}(\bar a))=\wp(\bar a)=\pm \sqrt{g_{2}/12}$.
Thus, $\bar a=q_{+}$ and so $b_{i}(q_{+})=-q_{+}$. Therefore, $q_{+}$ is the
only critical point of $f_{2}$ in $(\frac12 \omega_{1}, \frac12 \omega_{3})$,
while $f_{3}$ has no critical points in $(\frac12 \omega_{1}, \frac12
\omega_{3})$, namely $f_{3}$ is strictly monotone in $[\frac12 \omega_{1},
\frac12 \omega_{3}]$. By Lemma \ref{lemma4-7}, $f_{2}(\frac12 \omega
_{1})=f_{2}(\frac12 \omega_{3})=e_{1}+e_{3}=-e_{2}<\sqrt{g_{2}/3}%
=2\wp(q_{+})=f_{2}(q_{+})$, hence $(i)$ holds. Besides, $f_{3}(\frac12
\omega_{1})=e_{1}+e_{2}=-e_{3}$ and $f_{3}(\frac12\omega_{3})=e_{3}%
+e_{2}=-e_{1}$, we see that $(ii)$ holds.
\end{proof}

Now we go back to the proof of Lemma \ref{l1-22-1}. First let us consider
$b_{2}(a)$. Since $b_{2}(q_{+})=-q_{+}$, $\nabla G(q_{+})+\nabla G(b_{2}%
(q_{+}))=0$ due to the anti-symmetry of $\nabla G$. We will show that $\nabla
G(a)+\nabla G(b_{2}(a))\neq0$ for all $a\in(\frac{1}{2}\omega_{1},\frac{1}%
{2}\omega_{3})\setminus \{q_{+}\}$. For this purpose, we consider the following
real-valued function on $a\in I=[\frac{1}{2}\omega_{1},\frac{1}{2}\omega_{3}%
]$:
\[
H_{2}(a):=G_{x_{2}}(a)+G_{x_{2}}(b_{2}(a)).
\]
Since $b_{2}(\frac{1}{2}\omega_{1})=-\frac{1}{2}\omega_{3}$ and $b_{2}%
(\frac{1}{2}\omega_{3})=-\frac{1}{2}\omega_{1}$, we have $H_{2}(a)=0$ if
$a\in \{ \frac{1}{2}\omega_{3},\frac{1}{2}\omega_{1},q_{+}\}$. We want to show
that there is no other zeros of $H_{2}(a)=0$ in $[\frac{1}{2}\omega_{1}%
,\frac{1}{2}\omega_{3}]$. Note that $H_{2}^{\prime}(a)=0$ has at least two
solutions because $H_{2}(a)=0$ at $\frac{1}{2}\omega_{1}$, $\frac{1}{2}%
\omega_{3}$ and $q_{+}$. If we can prove that $H_{2}^{\prime}(a)=0$ has only
two solutions in $\bigl(\frac{1}{2}\omega_{1},\frac{1}{2}\omega_{3}\bigr)$,
then except the three points $\frac{1}{2}\omega_{1}$, $\frac{1}{2}\omega_{3}$
and $q_{+}$, $H_{2}(a)$ has no other zeros in $[\frac{1}{2}\omega_{1},\frac
{1}{2}\omega_{3}]$.

Let us compute $H_{2}^{\prime}(a)$. Note that $G_{x_{2}x_{2}}(a)$ and
$G_{x_{2}x_{2}}(b_{2}(a))$ can be derived as follows. From (\ref{40}), we have%
\[
\left(4\pi G_{z}(z)+\frac{2\pi ix_{2}}{\operatorname{Im}\tau}\right)^{\prime
}=\bigl(-\zeta(z)+\eta_{1}z\bigr)^{\prime}=\wp(z)+\eta_{1}.
\]
But%
\begin{align*}
&  \left(4\pi G_{z}(z)+\frac{2\pi ix_{2}}{\operatorname{Im}\tau}%
\right)^{\prime}=\frac{\partial}{\partial x_{1}}\left(4\pi G_{z}(z)+\frac{2\pi
ix_{2}}{\operatorname{Im}\tau}\right)=4\pi \frac{\partial G_{z}(z)}{\partial
x_{1}}\\
=  &  2\pi G_{x_{1}x_{1}}-2\pi iG_{x_{1}x_{2}}=-2\pi G_{x_{2}x_{2}}-2\pi
iG_{x_{1}x_{2}}+\frac{2\pi}{\operatorname{Im}\tau}.
\end{align*}
Thus we obtain
\begin{equation}
2\pi G_{x_{1}x_{1}}(z)=\operatorname{Re}(\eta_{1}+\wp(z)), \label{201-10-3}%
\end{equation}%
\begin{equation}
2\pi G_{x_{1}x_{2}}(z)=-\operatorname{Im}(\eta_{1}+\wp(z)), \label{201-10-4}%
\end{equation}%
\begin{equation}
2\pi G_{x_{2}x_{2}}(z)=\frac{2\pi}{\operatorname{Im}\tau}-\operatorname{Re}%
(\eta_{1}+\wp(z)). \label{201-10-5}%
\end{equation}
Since $\wp(z)$ is real for $z=a$ or $b_{2}(a)$, we have%
\begin{align}
2\pi iH_{2}^{\prime}(a)  &  =2\pi G_{x_{2}x_{2}}(a)+2\pi G_{x_{2}x_{2}}%
(b_{2}(a))b_{2}^{\prime}(a)\nonumber \\
&  =\frac{2\pi}{\operatorname{Im}\tau}-\eta_{1}-\wp(a)+\left(  \frac{2\pi
}{\operatorname{Im}\tau}-\eta_{1}-\wp(b_{2}(a))\right)  b_{2}^{\prime}(a).
\label{11-22-1}%
\end{align}
For $a\in \frac{1}{2}\omega_{1}+i\mathbb{R}$, $H_{2}^{\prime}(a)\in
i\mathbb{R}$. Recalling (\ref{fc1-6}) and denoting $\tilde{\eta}_{1}=\eta
_{1}-\frac{2\pi}{\operatorname{Im}\tau}$ for convenience, we see that
$H_{2}^{\prime}(a)=0$ is equivalent to
\[
\tilde{\eta}_{1}+\wp(a)+\left(  \tilde{\eta}_{1}+\wp(b_{2}(a))\right)
\frac{2\wp(a)+\wp(b_{2}(a))}{\wp(a)+2\wp(b_{2}(a))}=0.
\]
By direct computations, we get
\begin{equation}
3\tilde{\eta}_{1}(\wp(a)+\wp(b_{2}(a)))+2\wp(a)\wp(b_{2}(a))+\left[
\wp(a)+\wp(b_{2}(a))\right]  ^{2}=0. \label{12-22-1}%
\end{equation}
By \eqref{10-22-1}, $\wp(a)\wp(b_{2}(a))=[\wp(a)+\wp(b_{2}(a))]^{2}-g_{2}/4$.
Insert this into \eqref{12-22-1}, we obtain
\[
\lbrack \wp(a)+\wp(b_{2}(a))]^{2}+\tilde{\eta}_{1}(\wp(a)+\wp(b_{2}%
(a)))-\frac{g_{2}}{6}=0.
\]
Thus,
\begin{equation}
f_{2}(a)=\wp(a)+\wp(b_{2}(a))=\frac{1}{2}\left(  -\tilde{\eta}_{1}\pm
\sqrt{\tilde{\eta}_{1}^{2}+2g_{2}/3}\right)  :=B_{\pm}. \label{fc1-7}%
\end{equation}
Clearly $B_{+}>0>B_{-}$. By Lemma \ref{lemma4-8}, we have
\begin{equation}
f_{2}(a)=B_{+}\geq-e_{2}>0. \label{fc4-33}%
\end{equation}
Combining this with (\ref{12-22-1}), we conclude that
\[
\wp(a)+\wp(b_{2}(a))=B_{+}\quad \text{and}\quad \wp(a)\wp(b_{2}(a))=-\frac
{B_{+}^{2}+3\tilde{\eta}_{1}B_{+}}{2}=:A_{+},
\]
and so
\[
\wp(a)=\frac{B_{+}\pm \sqrt{B_{+}^{2}-4A_{+}}}{2},
\]
whenever $H_{2}^{\prime}(a)=0$. Since $\wp$ is one-to-one on $[\frac{1}%
{2}\omega_{1},\frac{1}{2}\omega_{3}]$, there are two distinct points $a_{+}$
and $a_{-}$ such that $\wp(a_{\pm})=\frac{B_{+}\pm \sqrt{B_{+}^{2}-4A_{+}}}{2}%
$. Hence, we have proved that $H_{2}^{\prime}(a)=0$ has exactly two zero
points in $(\frac{1}{2}\omega_{1},q_{+})\cup(q_{+},\frac{1}{2}\omega_{3})$,
which implies that $H(a)\neq0$ for any $a\in(\frac{1}{2}\omega_{1},q_{+}%
)\cup(q_{+},\frac{1}{2}\omega_{3})$. In conclusion, for any $a\in [\frac{1}{2}\omega_{1},\frac{1}{2}\omega_{3}]$, $(a, b_2(a))$ can not satisfy (\ref{new213}).

Next we consider $b_{3}(a)$. We also define
\[
H_{3}(a)=G_{x_{2}}(a)+G_{x_{2}}(b_{3}(a)).
\]

The difference is that $H_{3}(q_{+})\neq0$ since $b_{3}(q_{+})=2q_{+}\in
(0,\frac{1}{2}\omega_2)\subset i\mathbb{R}^+$. Thus, we have to show that $H_{3}(a)$ has only two zeros at
$\frac{1}{2}\omega_{1}$ and $\frac{1}{2}\omega_{3}$, namely we need to prove
$H_{3}^{\prime}(a)=0$ has only one zero point. The computation of
$H_{3}^{\prime}(a)$ is completely the same as $H_{2}^{\prime}(a)$. Hence,
$H_{3}^{\prime}(a)=0$ implies (see \eqref{fc1-7})
\[
f_{3}(a)=\wp(a)+\wp(b_{3}(a))=\frac{1}{2}\left(  -\tilde{\eta}_{1}\pm
\sqrt{\tilde{\eta}_{1}^{2}+2g_{2}/3}\right)  =B_{\pm}.
\]
We note that this $B_{\pm}$ is the same one in (\ref{fc1-7}). Recall from
Lemma \ref{lemma4-8} that $f_{3}$ is strict monotone in $[\frac{1}{2}%
\omega_{1},\frac{1}{2}\omega_{3}]$ and $-e_{1}\leq f_{3}\leq-e_{3}$. Since
$B_{+}\geq-e_{2}>-e_{3}$ by (\ref{fc4-33}), it follows that $f_{3}(a)=B_{-}$
whenever $H_{3}^{\prime}(a)=0$. By the monotonicity of $f_{3}$, the $a$
satisfying $f_{3}(a)=B_{-}$ is unique. Thus $H_{3}^{\prime}(a)=0$ has only one
solution in $[\frac{1}{2}\omega_{1},\frac{1}{2}\omega_{3}]$, and then
$H_{3}(a)\neq0$ for any $a\in(\frac{1}{2}\omega_{1},\frac{1}{2}\omega_{3})$. In conclusion, for any $a\in [\frac{1}{2}\omega_{1},\frac{1}{2}\omega_{3}]$, $(a, b_3(a))$ can not satisfy (\ref{new213}).
This completes the proof of Lemma \ref{l1-22-1}.
\end{proof}

\begin{lemma}
\label{l2-22-1} Let $(a, b)$ be a solution of (\ref{new213}).
Then neither $a$ nor $b$ can be on the coordinate axes.
\end{lemma}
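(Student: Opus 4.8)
The plan is to argue by contradiction: suppose $(a,b)$ solves (\ref{new213}) with, say, $a$ on a coordinate axis. The system (\ref{new213}) is symmetric in $a,b$ and is preserved by $z\mapsto -z$ and by $z\mapsto\bar z$: indeed $\wp'$ is odd and $\wp'(\bar z)=\overline{\wp'(z)}$, while from (\ref{40}) and $\eta_{1}\in\mathbb{R}$ (Remark \ref{re1-10-3}) one gets $G_{z}(-z)=-G_{z}(z)$ and $G_{z}(\bar z)=\overline{G_{z}(z)}$. Hence it suffices to rule out $a\in(0,\frac{1}{2}\omega_{1})$ (real axis) and $a\in(0,\frac{1}{2}\omega_{2})$ (imaginary axis). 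I will carry out the real-axis case in detail; the imaginary-axis case is verbatim the same after swapping $\omega_{1}\leftrightarrow\omega_{2}$, $x_{1}\leftrightarrow x_{2}$ and using that $\wp'$ is purely imaginary there. Throughout, Lemma \ref{l1-22-1} lets me assume $a,b\notin\partial E_{\tau}$, and $G_{z}(a)\neq0$ because the open axis contains no critical point of $G$.

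On the real axis $\wp(a),\wp'(a),G_{z}(a)\in\mathbb{R}$ (Remark \ref{re1-10-3} and (\ref{40})), so the second equation of (\ref{new213}) forces $\wp'(b)=-\wp'(a)\in\mathbb{R}$ and, via (\ref{10-22-1}), $\wp(b)=\frac{1}{2}\bigl(-\wp(a)\pm\sqrt{g_{2}-3\wp(a)^{2}}\bigr)$. Since $\wp$ decreases from $+\infty$ to $e_{1}$ along $(0,\frac{1}{2}\omega_{1})$ and passes through $\wp(2q_{-})=\sqrt{g_{2}/3}$ (Remark \ref{11-11-1}), the quantity $g_{2}-3\wp(a)^{2}$ is negative for $a\in(0,2q_{-})$ and nonnegative for $a\in[2q_{-},\frac{1}{2}\omega_{1})$, and I treat the two ranges separately.

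If $a\in(2q_{-},\frac{1}{2}\omega_{1})$ then $\wp(b)\in\mathbb{R}$, so by Lemma \ref{11-9-3} the point $b$ lies on the union of the axes and $\partial E_{\tau}$; the boundary is excluded by Lemma \ref{l1-22-1}, and $b$ cannot lie on the imaginary axis because $\wp'(b)\in\mathbb{R}\setminus\{0\}$ whereas $\wp'$ is purely imaginary there (Remark \ref{re1-10-3}). Thus $b$ is on the real axis, and now the easy case applies: since $\wp''=6\wp^{2}-\frac{g_{2}}{2}>0$ on the real axis (there $\wp>e_{1}>\sqrt{g_{2}/12}$ by (\ref{fc1-1})), $\wp'$ is strictly monotone, hence injective, on $(0,\frac{1}{2}\omega_{1})$. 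As $\wp'(a)<0<\wp'(b)$ forces $b\in(-\frac{1}{2}\omega_{1},0)$ with $\wp'(-b)=\wp'(a)$ and $a,-b\in(0,\frac{1}{2}\omega_{1})$, injectivity gives $b=-a$, contradicting $a\notin\{-a,\pm b\}$. (The endpoint $a=2q_{-}$ yields $b=\pm q_{-}\in\partial E_{\tau}$, again excluded.)

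The decisive range is $a\in(0,2q_{-})$, where $\wp(b)=-\frac{1}{2}\wp(a)\pm\frac{i}{2}\sqrt{3\wp(a)^{2}-g_{2}}\notin\mathbb{R}$, so the two nontrivial branches of $\wp'(a)+\wp'(b)=0$ form a conjugate pair $b_{3}(a)=\overline{b_{2}(a)}$ lying strictly off the axes and off $\partial E_{\tau}$. Because $a$ is real, $G_{x_{2}}(a)=0$, so the imaginary part of the first equation of (\ref{new213}) collapses to the single condition $G_{x_{2}}(b)=0$, and $G_{x_{2}}(b_{3})=-G_{x_{2}}(b_{2})$ by the evenness of $G$ in $x_{2}$, so both branches are covered at once. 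I therefore set $H(a):=G_{x_{2}}(b_{2}(a))$ and aim to show $H\neq0$ on $(0,2q_{-})$. The endpoints are favourable: $b_{2}(2q_{-})=q_{-}$ lies on a horizontal edge so $H(2q_{-})=0$, while $b_{2}(a)\to0$ as $a\to0^{+}$, where the logarithmic singularity of $G$ forces $H$ to blow up with a fixed sign; the goal is to prove $H$ is strictly monotone, so that $H$ vanishes only in the limit $a\to2q_{-}$. To compute $H'$ one differentiates $G_{x_{2}}(b_{2}(a))$ using (\ref{201-10-5}) together with the branch identity $b_{2}'(a)=\frac{2\wp(a)+\wp(b_{2}(a))}{\wp(a)+2\wp(b_{2}(a))}$ coming from (\ref{fc1-6}) and (\ref{10-22-1}). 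The main obstacle is precisely this step: unlike the boundary analysis of Lemma \ref{l1-22-1}, here $\wp(b_{2}(a))$ is genuinely complex and $b_{2}(a)$ moves two-dimensionally (from $\wp'(b)\in\mathbb{R}$ one gets $\bar b\equiv a-b$, so $b_{2}(a)$ traces the vertical line $\operatorname{Re} z=\frac{1}{2}a$), so one must track both $G_{x_{1}x_{2}}$ and $G_{x_{2}x_{2}}$ rather than the single real quantity used for $f_{2},f_{3}$ in Lemma \ref{lemma4-8}. Equivalently, the crux is the elementary but delicate fact that on each vertical segment $\{\operatorname{Re} z=c,\ 0<\operatorname{Im} z<\frac{1}{2}\operatorname{Im}\tau\}$ the Green function $G$ is strictly monotone in $\operatorname{Im} z$, i.e. $G_{x_{2}}$ has no interior zero; once this is settled, $H\neq0$ on $(0,2q_{-})$ contradicts $G_{x_{2}}(b)=0$ and completes the proof.
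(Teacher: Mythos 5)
Your proposal has a genuine gap in exactly the range you yourself call decisive. For $a\in(0,2q_{-})$ you correctly deduce that $b$ lies strictly off the axes and off $\partial E_{\tau}$ (since $\wp(b)\notin\mathbb{R}$), and you correctly reduce the problem to showing $G_{x_{2}}(b)\neq0$ there; but you then leave the needed fact --- that $G_{x_{2}}$ has no zero on $E_{\tau}\setminus\bigl(\mathbb{R}\cup(\pm\frac{1}{2}\omega_{2}+\mathbb{R})\bigr)$ --- explicitly unsettled (``once this is settled\dots''). That fact is not a routine verification: it is precisely the nontrivial property of the Green function on rectangular tori that the paper imports as a known result from \cite[Lemma 2.1]{CLW4}, and without it your main case does not close. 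Moreover, your $H(a)=G_{x_{2}}(b_{2}(a))$ monotonicity machinery is logically idle once the issue is identified: if the crux fact holds, then $G_{x_{2}}(b_{2}(a))\neq0$ is \emph{immediate} for every $a\in(0,2q_{-})$, since $\wp(b_{2}(a))\notin\mathbb{R}$ places $b_{2}(a)$ off the horizontal lines --- no endpoint analysis, no differentiation of the branch via \eqref{fc1-6}, no tracking of $G_{x_{1}x_{2}}$ and $G_{x_{2}x_{2}}$ is needed; and if the fact fails, monotonicity of $H$ along one family of branches cannot substitute for it. So the elaborate $H^{\prime}$ computation should be deleted rather than completed.

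For comparison, the paper's proof invokes \cite[Lemma 2.1]{CLW4} at the outset rather than at the end: $G_{x_{2}}(a)=0$ for $a$ on the real axis forces $G_{x_{2}}(b)=0$, the cited nonvanishing confines $b$ to $\mathbb{R}\cup(\pm\frac{1}{2}\omega_{2}+\mathbb{R})$, Lemma \ref{l1-22-1} removes the boundary lines, and then $\wp^{\prime\prime}>0$ on the punctured real axis gives $b=-a$, a contradiction --- with no case split on the sign of $g_{2}-3\wp(a)^{2}$ at all. To your credit, your treatment of the range $a\in(2q_{-},\frac{1}{2}\omega_{1})$ is correct and matches the paper's endgame (including the clean justification $\wp^{\prime\prime}=6\wp^{2}-g_{2}/2>0$ from $\wp>e_{1}>\sqrt{g_{2}/12}$, and the endpoint $a=2q_{-}$ giving $b=\pm q_{-}\in\partial E_{\tau}$), and your observations that the nontrivial branches form a conjugate pair $b_{3}=\overline{b_{2}}$ with $\operatorname{Re}b=\frac{a}{2}$ are accurate. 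But those are exactly the cases that the Green-function fact renders unnecessary, while the one case your dissection genuinely isolates is the one you did not finish.
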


\begin{proof}
Suppose that $a$ is on the $x_{1}$ axis. Note from $a\neq -a$ that $a\notin\{ 0, \pm\frac{\omega_{1}}{2}\}$.
Without loss of generality, we may assume $a>0$, i.e. $a\in (0,\frac{\omega_1}{2})$. Then (cf. \cite[Lemma
2.1]{CLW4})%
\[
G_{x_{1}}(a)<0,\quad G_{x_{2}}(a)=0.
\]
As a result, $G_{x_{2}}(b)=-G_{x_{2}}(a)=0$. It is known (cf. \cite[Lemma
2.1]{CLW4}) that $G$ satisfies%
\[
G_{x_{2}}(z)\neq0,\quad \text{if}\;z\in E_\tau\setminus(\mathbb{R}\cup \bigl(\pm\tfrac{1}%
{2}\omega_{2}+\mathbb{R}\bigr)).
\]
So $b\in \mathbb{R}\cup \bigl(\pm\frac{1}{2}\omega_{2}+\mathbb{R}\bigr)$. By
Lemma~\ref{l1-22-1}, $b\notin \pm\frac{1}{2}\omega_{2}+\mathbb{R}$. Hence,
$b\in \mathbb{R}$ and $\wp^{\prime}(b)=-\wp^{\prime}(a)>0$. This gives $b\in (-\frac{\omega_1}{2}, 0)=(-\frac{1}{2},0)$.

Note that $\lim_{z\rightarrow0,z<0}\wp^{\prime \prime}(z)=+\infty$ and
$\wp^{\prime \prime}(z)=0$ has solutions only on $\partial E_{\tau}$. So $\wp^{\prime \prime}(x_{1})>0$ for $x_{1}\in (-\frac{1}{2},0)$. This implies that $x_{1}=-a$ is
the only solution of $\wp^{\prime}(x_{1})=-\wp^{\prime}(a)$ for $x_{1}\in (-\frac{1}{2},0)$.
Thus, $b=-a$, a contradiction. The other case that $a$ is on the
$x_{2}$ axis be proved similarly.
\end{proof}

\begin{lemma}
\label{lemma-7} Let $(a_{0},b_{0})$ be a solution of (\ref{new213}). Then
either the $x_{1}$ coordinates or the $x_{2}$ coordinates of $a_{0}$ and
$b_{0}$ take the same sign.
\end{lemma}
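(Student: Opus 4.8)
The plan is to rule out the only configuration left open by the previous lemmas, namely that $a_0$ and $b_0$ sit in two \emph{diagonally opposite} open quadrants. By Lemmas~\ref{l1-22-1} and~\ref{l2-22-1}, any solution $(a_0,b_0)$ of \eqref{new213} has both points in the interior of $E_\tau$ and off the two coordinate axes, so each of $a_0,b_0$ lies in one of the four open quadrants $Q_1,\dots,Q_4$ of the rectangle. The asserted conclusion fails \emph{precisely} when the two $x_1$-coordinates have opposite signs and the two $x_2$-coordinates have opposite signs, i.e.\ when $a_0$ and $b_0$ lie in diagonally opposite quadrants. After relabeling I may assume $a_0\in Q_1$ and $b_0\in Q_3$ (the case $a_0\in Q_2,\ b_0\in Q_4$ is identical after reflecting in an axis). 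Then $-b_0\in Q_1$, and since $\wp'$ is odd the second equation of \eqref{new213} gives $\wp'(-b_0)=-\wp'(b_0)=\wp'(a_0)$. Thus I would be done once I know that $\wp'$ is injective on the open quarter-rectangle $Q_1$: injectivity forces $-b_0=a_0$, i.e.\ $b_0=-a_0$, contradicting $a_0\notin\{\pm b_0\}$.

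So the heart of the matter is to prove that $\wp'$ is one-to-one on $Q_1$. Here I would exploit Lemma~\ref{11-9-3}: the four edges of $Q_1$ form exactly the path $(0,\tfrac12\omega_1]\cup[\tfrac12\omega_1,\tfrac12\omega_3]\cup[\tfrac12\omega_3,\tfrac12\omega_2]\cup[\tfrac12\omega_2,0)$ on which $\wp$ is a real bijection onto $(-\infty,\infty)$. Since $\wp$ is holomorphic and carries $\partial Q_1$ homeomorphically onto $\mathbb{R}$, the argument principle shows $\wp$ maps $Q_1$ biholomorphically onto a half-plane; a one-line check near the bottom edge (where $\wp'<0$, so $\operatorname{Im}\wp(x_1+i\varepsilon)\approx\varepsilon\,\wp'(x_1)<0$) identifies it as the lower half-plane $\{\operatorname{Im}w<0\}$. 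Writing $w=\wp(z)$, the cubic relation \eqref{new34} turns $\wp'$ into a single-valued branch $Y(w)=\sqrt{4w^3-g_2w-g_3}$ on $\{\operatorname{Im}w<0\}$ (holomorphic there because the roots $e_1,e_2,e_3$ are real), and injectivity of $\wp'$ on $Q_1$ becomes injectivity of $Y$ on the lower half-plane.

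To prove $Y$ is injective I would run the argument principle once more, now on $\{\operatorname{Im}w<0\}$. By Remark~\ref{re1-10-3}, $\wp'$ is real on the two horizontal edges and purely imaginary on the two vertical edges of $Q_1$; equivalently $Y$ is real on $(e_2,e_3)\cup(e_1,\infty)$ and imaginary on $(-\infty,e_2)\cup(e_3,e_1)$, so the boundary image $Y(\mathbb{R})$ lies in $\mathbb{R}\cup i\mathbb{R}$. This boundary map is itself not injective: on each of the edges through $q_+$ and $q_-$ the function $Y$ attains an interior extremum exactly where $\wp''=0$, i.e.\ at $w=\pm\sqrt{g_2/12}$, and folds back on itself; meanwhile $Y\sim -2w^{3/2}$ as $w\to\infty$. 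Counting the winding number of this folded boundary curve about an arbitrary finite value $w_0\notin Y(\mathbb{R})$ then shows that every value is attained at most once in the lower half-plane, which is the required injectivity.

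The main obstacle is precisely this last winding count: because $Y$ doubles back at the two folds $\pm\sqrt{g_2/12}$, the boundary curve is not a simple closed curve, and one must verify that these folds lie on the coordinate axes as spikes contributing nothing to the winding, so that the net winding number never exceeds one. Establishing this at-most-once count—equivalently, pinning down the image $Y(\{\operatorname{Im}w<0\})$ and checking it avoids the closed third quadrant of the $w$-plane—is the only genuinely nontrivial step; everything else is bookkeeping with the reflections $z\mapsto -z$ and $z\mapsto\bar z$ already built into the rectangular symmetry of $\wp$. (As an alternative to the winding computation one could instead extend the monotone branch analysis of Lemmas~\ref{lemma4-7} and~\ref{lemma4-8} from the boundary edges into the interior of $Q_1$, tracking the two nontrivial solution branches $b_2,b_3$ and showing neither enters $Q_3$; this yields the same conclusion but trades the complex-analytic count for a longer real-variable argument.)
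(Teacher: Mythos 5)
Your reduction is correct and takes a genuinely different route from the paper. Via Lemmas~\ref{l1-22-1} and \ref{l2-22-1} and the oddness of $\wp'$, you reduce the lemma to the injectivity of $\wp'$ on one open quarter-rectangle $Q_1$, and then transplant that question, through the conformal bijection $\wp:Q_1\to\{\operatorname{Im}w<0\}$ (which your boundary check via Lemma~\ref{11-9-3} correctly identifies), to the injectivity of the single-valued branch $Y(w)=\sqrt{4w^3-g_2w-g_3}$ on the lower half-plane. The paper instead runs a continuation argument: it scales the pair toward the origin through the boxes $T_t$, uses $\wp'(z)=-\frac{2}{z^3}+O(|z|)$ to get $b(a)=e^{\pm\pi i/3}a(1+O(|a|))$, so the sign condition holds for small $t$, and then analyzes a first-failure point $t_0$, ruling out limit configurations on the axes by locating the curves $\{\wp'\in i\mathbb{R}^{\pm}\}$ (see \eqref{11-7-2}, \eqref{12-7-2}). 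Note that your claim—injectivity of $\wp'$ on each open quadrant—is in fact slightly stronger than what the paper's deformation yields, and like the paper's argument it uses only the second equation of \eqref{new213}; the trade-off is that the paper's route is entirely local and elementary, while yours invokes a global mapping theorem and an argument-principle count.

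However, as submitted your proof has a genuine gap exactly where you flag it: the ``at most once'' winding count is asserted, not proved, and it is the entire content of the injectivity claim. The good news is that it closes with tools already in the paper. (i) By Remark~\ref{re1-10-3} and Lemma~\ref{11-9-3}, $Y$ is negative on $(e_1,\infty)$, lies in $i\mathbb{R}^+$ on $(e_3,e_1)$ with a single fold at $w=\sqrt{g_2/12}\in(e_3,e_1)$ (by \eqref{fc1-1}), is positive on $(e_2,e_3)$ with a fold at $-\sqrt{g_2/12}$, and lies in $i\mathbb{R}^-$ on $(-\infty,e_2)$; so the boundary image consists of two retraceable spikes on the positive real and positive imaginary axes together with two unbounded rays on the negative real and negative imaginary axes. (ii) Truncating at $|w|=R$, on the large semicircle $Y\sim-2w^{3/2}$ sweeps the argument monotonically from $-\pi/2$ to $\pi$; hence the closed image curve has winding number $1$ about every off-axis point of the first, second and fourth quadrants of modulus less than roughly $2R^{3/2}$, and winding number $0$ about the open third quadrant, the spikes contributing nothing because they are traversed back and forth. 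This gives at most one interior preimage for every value off the axes. (iii) The argument principle is silent for values on the axes, but $Y'(w)=(12w^2-g_2)/(2Y(w))$ vanishes only at the real points $\pm\sqrt{g_2/12}$ (recall $g_2>0$ from Lemma~\ref{l1-30-1}), so $Y$ is locally injective on the open half-plane; two interior preimages of an axis value would then produce two interior preimages of a nearby off-axis value, contradicting (ii). With (i)--(iii) your injectivity claim, and hence the lemma, follows. So: correct and even stronger reduction, viable strategy, but the decisive step needed to be carried out rather than described.
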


\begin{proof}
By Lemma~\ref{l1-22-1}, both $a_{0}$ and $b_{0}\neq \pm a_{0}$ are in the interior
of $E_{\tau}$. Suppose that this lemma fails. Define
\[
T_{t}:=\bigl \{a:\; \;|x_{j}(a)|\leq t|x_{j}(a_{0})|,\; \;j=1,2\bigr \},\quad
t>0.
\]
Here we use $x_{j}(z)$ to denote the $j^{th}$ coordinate of $z$. We say the
sign condition holds for $t$ if for any pair $(a,b(a))$, $a\in T_{t}$, either
$x_{1}(a)x_{1}(b)\geq0$ or $x_{2}(a)x_{2}(b)\geq0$, where $b(a)$ is the branch
of solutions of $\wp^{\prime}(a)+\wp^{\prime}(b)=0$ satisfying $b(a_{0}%
)=b_{0}$. By our assumption, the sign condition fails for $T_{1}$.

On the other hand, if $|z|$ is small, then $\wp^{\prime}(z)=-\frac{2}{z^{3}%
}+O(|z|)$. So if $t$ is small and $a\in T_{t}$, then we can deduce from
$\wp^{\prime}(a)+\wp^{\prime}(b(a))=0$ and $b(a)\neq-a$ that
\[
b(a)=e^{\pm \pi i/3}a(1+O(|a|)).
\]
Thus, the sign condition holds for $T_{t}$ provided that $t$ is small.

Let $t_{0}\in(0,1]$ be the smallest $t$ so that for any small $\varepsilon>0$,
the sign condition fails for $T_{t_{0}+\varepsilon}$. So there is
$a_{\varepsilon}\in T_{t_{0}+\varepsilon}$ such that both $x_{1}%
(a_{\varepsilon})x_{1}(b(a_{\varepsilon}))<0$ and $x_{2}(a_{\varepsilon}%
)x_{2}(b(a_{\varepsilon}))<0$. We may assume $(a_{\varepsilon}, b(a_{\varepsilon}))\to
(\bar{a}_{0}, \bar{b}_{0})$ as $\varepsilon\to 0$ up to a
subsequence. Clearly $\wp^{\prime}(\bar{a}_{0})+\wp^{\prime}(\bar{b}_{0})=0$
and $x_{j}(\bar{a}_{0})x_{j}(\bar{b}_{0})\leq0$ for $j=1,2$. By the choice of
$t_{0}$, $\bar{a}_{0}\in \partial T_{t_{0}}$. Since $\wp^{\prime \prime}(z)=0$
implies $z\in \partial E_{\tau}$, we have $\wp^{\prime \prime}(-\bar{a}_{0}%
)\neq0$, which implies that $-\bar{a}_{0}$ is a simple root of $\wp^{\prime
}(\bar{a}_{0})+\wp^{\prime}(b)=0$. This, together with $b(a_{\varepsilon}%
)\neq-a_{\varepsilon}$, gives $\bar{b}_{0}=b(\bar{a}_{0})\neq-\bar{a}_{0}$.

To yield a contradiction, we first show that one of $\bar{a}_{0}$ or $\bar
{b}_{0}$ must lie on the coordinate axis. If not, then $x_{j}(\bar{a}%
_{0})x_{j}(\bar{b}_{0})<0$ for $j=1,2$. We could choose $a_{\delta}%
:=(1-\delta)\bar{a}_{0}$, $b_{\delta}:=b(a_{\delta})$, such that $(a_{\delta
},b_{\delta})\rightarrow(\bar{a}_{0},\bar{b}_{0})$ as $\delta \rightarrow0$ and
$a_{\delta}\in T_{(1-\delta)t_{0}}$ for $\delta$ small. Clearly, the sign
condition fails for $(a_{\delta},b_{\delta})$ provided $\delta$ is small,
which yields a contradiction to the smallness of $t_{0}$.

Without loss of generality, we assume that one of $\bar{a}_{0}$ and $\bar
{b}_{0}$ is on the imaginary axis. Since $\wp^{\prime}(\bar{a}_{0}%
)+\wp^{\prime}(\bar{b}_{0})=0$, we have both $\wp^{\prime}(\bar{a}_{0})$ and
$\wp^{\prime}(\bar{b}_{0})$ are pure imaginary. Without loss of generality, we
assume $\wp^{\prime}(\bar{a}_{0})=i\xi$ for some real number $\xi>0$. We can
prove the following fact about the curve $\{z:\wp^{\prime}(z)\in
i\mathbb{R}^{+}\}$. For $|z|$ small, $\wp^{\prime}(z)=-\frac{2}{z^{3}%
}+O(|z|)\in i\mathbb{R}^{+}$ if and only if $z=re^{i\theta_{i}}(1+O(r))$,
where $\theta_{i}=\frac{\pi}{6}$, $\frac{5\pi}{6}$, or $\frac{3\pi}{2}$.
Hence, for small $\delta$,%
\begin{equation}
\{|z|\leq \delta \} \cap \{z:\; \wp^{\prime}(z)\in i\mathbb{R}^{+}\} \setminus
i\mathbb{R}^{-}\subset \{z:\;z=(x_{1},x_{2}),x_{2}>0\}. \label{10-7-2}%
\end{equation}
Since $\wp^{\prime}(z)\in \mathbb{R}$ for $z\in \mathbb{R}$, \eqref{10-7-2}
implies%
\begin{equation}
\{z:\; \wp^{\prime}(z)\in i\mathbb{R}^{+}\} \setminus i\mathbb{R}^{-}%
\subset \{z:\;z=(x_{1},x_{2}),x_{2}>0\}. \label{11-7-2}%
\end{equation}
Similarly, we have%
\begin{equation}
\{ \wp^{\prime}(z):\; \wp^{\prime}(z)\in i\mathbb{R}^{-}\} \setminus
i\mathbb{R}^{+}\subset \{z:\;z=(x_{1},x_{2}),x_{2}<0\}. \label{12-7-2}%
\end{equation}

Now we go back to $(\bar{a}_{0},\bar{b}_{0})$. Recall that we have assumed
that one of $\bar{a}_{0}$ and $\bar{b}_{0}$ is on the imaginary axis and $\wp'(\bar{a}_{0})\in i\mathbb{R}^+$. Suppose
that $\bar{a}_{0}$ is on the imaginary axis. Since $\wp(t\omega_{2})$ is
increasing for $t\in(0,\frac{1}{2}]$, we have $\wp^{\prime}(z)\in
i\mathbb{R}^{-}$ for $z\in(0,\frac{1}{2}\omega_{2}]$. By our assumption
$\wp^{\prime}(\bar{a}_{0})\in i\mathbb{R}^{+}$, we find that $\bar{a}_{0}\in
i\mathbb{R}^{-}$. Since $x_{2}(\bar{a}_{0})x_{2}(\bar{b}_{0})\leq0$, we find
$x_{2}(\bar{b}_{0})>0$. From $\wp^{\prime}(\bar{b}_{0})=-\wp^{\prime}(\bar
{a}_{0})\in i\mathbb{R}^{-}$ and \eqref{12-7-2}, we have $\bar{b}_{0}\in
i\mathbb{R}^{+}$. But $\wp^{\prime}(\bar{b}_{0})=-\wp^{\prime}(\bar{a}%
_{0})=\wp^{\prime}(-\bar{a}_{0})$ and both $-\bar{a}_{0}$ and $\bar{b}_{0}$
are on the line $i\mathbb{R}^{+}$, which implies $\bar{b}_{0}=-\bar{a}_{0}$
because $\wp^{\prime \prime}(z)\neq0$ for $z\in(0,\frac{1}{2}\omega_{2})$. This
is a contradiction. Thus we have proved that $\bar{a}_{0}$ is not on the imaginary
axis, which implies that $\bar{b}_{0}$ is on the imaginary axis. Since
$\wp^{\prime}(\bar{b}_{0})\in i\mathbb{R}^{-}$, we have $\bar{b}_{0}\in
i\mathbb{R}^{+}$. Then (\ref{11-7-2}) gives
\[
\bar{a}_{0}\in \{z:\; \wp^{\prime}(z)\in i\mathbb{R}^{+}\} \subset
i\mathbb{R}^{-}\cup \{z:\;x_{2}>0\}.
\]
Since $\bar{a}_{0}\notin i\mathbb{R}^{-}$, we have $x_{2}(\bar{a}_{0})>0$ and
then $x_{2}(\bar{a}_{0})x_{2}(\bar{b}_{0})>0$. This is a contradiction to
$x_{2}(\bar{a}_{0})x_{2}(\bar{b}_{0})\leq0$.
\end{proof}

Now we are in a position to prove Theorem \ref{thm1}.

\begin{proof}
[Proof of Theorem 1.1]We just need to prove that \eqref{new213} has no
solutions for $\tau \in i\mathbb{R}^{+}$, i.e. $E_{\tau}$ is a rectangle.

Assume by contradiction that $(a,b)$ is a solution of \eqref{new213}. By
Lemmas~\ref{l1-22-1} and \ref{l2-22-1}, both $a$ and $b$ are in the interior
of $E_{\tau}$, and neither $a$ nor $b$ is on the coordinate axes. On the other
hand, it is well known (cf. \cite[Lemma 2.1]{CLW4}) that the Green function
$G$ in the rectangle $E_{\tau}$ satisfies%
\[
G_{x_{1}}(x_{1},x_{2})<0\text{ \ if }x_{1}\in(0,\tfrac{1}{2})\text{ and }%
x_{2}\in(-\tfrac{|\tau|}{2},\tfrac{|\tau|}{2});
\]%
\[
G_{x_{1}}(x_{1},x_{2})>0\text{ \ if }x_{1}\in(-\tfrac{1}{2},0)\text{ and
}x_{2}\in(-\tfrac{|\tau|}{2},\tfrac{|\tau|}{2});
\]%
\[
G_{x_{2}}(x_{1},x_{2})<0\text{ \ if }x_{2}\in(0,\tfrac{|\tau|}{2})\text{ and
}x_{1}\in(-\tfrac{1}{2},\tfrac{1}{2});
\]%
\[
G_{x_{2}}(x_{1},x_{2})>0\text{ \ if }x_{2}\in(-\tfrac{|\tau|}{2},0)\text{ and
}x_{1}\in(-\tfrac{1}{2},\tfrac{1}{2}).
\]
Together with Lemma \ref{lemma-7}, we conclude that $G_{z}(a)+G_{z}(b)\not =%
0$, which yields a contradiction with (\ref{new213}). This completes the proof.
\end{proof}

\end{document}